\newtheorem{theorem}{Theorem}[section]
\newtheorem{lemma}[theorem]{Lemma}
\newtheorem{proposition}[theorem]{Proposition}
\newtheorem{corollary}[theorem]{Corollary}
\theoremstyle{definition}
\newtheorem{definition}[theorem]{Definition}
\newtheorem{question}[theorem]{Question}
\newtheorem{conjecture}[theorem]{Conjecture}
\newtheorem{remark}[theorem]{Remark}
\newcommand{\Hom}{\text{Hom}}
\newcommand{\Rep}{\text{Rep}}
\newcommand{\h}{\mathfrak{h}}
\newcommand{\ben}{\begin{enumerate}}
\newcommand{\een}{\end{enumerate}}
\newcommand{\CC}{{\mathbb{C}}}
\newcommand{\ZZ}{{\mathbb{Z}}}
\newcommand{\arxiv}[1]{\href{http://arxiv.org/abs/#1}{{\tt arXiv:#1}}}
\theoremstyle{plain}
\newtheorem*{sol}{Solution}
\theoremstyle{definition}
\theoremstyle{remark}
\newcommand{\solu}[1]{\begin{sol}{\bf (\ref{#1})}}
\begin{document}

\title[On Cohen--Macaulayness of $S_n$-invariant subspace arrangements]{On Cohen--Macaulayness of $S_n$-invariant\\subspace arrangements}
\date{June 13, 2015}
\subjclass[2010]{%
14N20, 
13H10.
}

\author{Aaron Brookner}
\address{Department of Mathematics, Massachusetts Institute of Technology,
Cambridge, MA 02139, USA}
\email{brookner@mit.edu}

\author{David Corwin}
\address{Department of Mathematics, Massachusetts Institute of Technology,
Cambridge, MA 02139, USA}
\email{corwind@mit.edu}

\author{Pavel Etingof}
\address{Department of Mathematics, Massachusetts Institute of Technology,
Cambridge, MA 02139, USA}
\email{etingof@math.mit.edu}

\author{Steven V Sam}
\address{Department of Mathematics, University of California, Berkeley, 
Berkeley, CA, 94720, USA}
\email{svs@math.berkeley.edu}

\maketitle

\begin{abstract}
Given a partition $\lambda$ of $n$, consider the subspace $E_\lambda$ of $\CC^n$ where the first $\lambda_1$ coordinates are equal, the next $\lambda_2$ coordinates are equal, etc. In this paper, we study subspace arrangements $X_\lambda$ consisting of the union of translates of $E_\lambda$ by the symmetric group. In particular, we focus on determining when $X_\lambda$ is Cohen--Macaulay. This is inspired by previous work of the third author coming from the study of rational Cherednik algebras and which answers the question positively when all parts of $\lambda$ are equal. We show that $X_\lambda$ is not Cohen--Macaulay when $\lambda$ has at least $4$ distinct parts, and handle a large number of cases when $\lambda$ has $2$ or $3$ distinct parts. Along the way, we also settle a conjecture of Sergeev and Veselov about the Cohen--Macaulayness of algebras generated by deformed Newton sums. Our techniques combine classical techniques from commutative algebra and invariant theory; in many cases we can reduce an infinite family to a finite check which can sometimes be handled by computer algebra.
\end{abstract}

\setcounter{tocdepth}{1}
\tableofcontents

\section{Introduction} 

Let $\lambda=(\lambda_1, \dots ,\lambda_r)$ be a partition of $n$. Then we have a subspace $E_\lambda$ in $\Bbb C^n$
defined by the equations 
$$
x_1=\cdots=x_{\lambda_1},\quad x_{\lambda_1+1}=\cdots=x_{\lambda_1+\lambda_2},\quad \dots,\quad  x_{n-\lambda_r+1}=\cdots=x_n.
$$
Let $S_n$ be the group of permutations of $n$ elements, and 
\[
X_\lambda=S_n\cdot E_\lambda
\]
be the union of the $S_n$-translates of $E_\lambda$. 
Then $X_\lambda$ is an algebraic variety with an action of $S_n$. Note that $\dim X_\lambda=r$, and $X_\lambda=X_\lambda^0\times \Bbb C$, where 
\[
X_\lambda^0=\lbrace{x\in X_\lambda \mid \sum x_i=0\rbrace}
\]
and $\dim X_\lambda^0=r-1$. 

The goal of this paper is to address the following two questions.   

\begin{question}\label{whencm} (i) For which $\lambda$ is the variety $X_\lambda$ Cohen--Macaulay (CM)? 
 
(ii) For which $\lambda$ is the variety $X_\lambda/S_n$ CM? 
\end{question} 

Clearly, in these questions we can replace $X_\lambda$ with $X_\lambda^0$ (this does not change the questions). 
Also, it is clear that if the answer to Question \ref{whencm}(i) is ``yes'' for some partition $\lambda$, then 
the answer to Question \ref{whencm}(ii) for this partition is also ``yes''.  

Questions \ref{whencm}(i),(ii) seem to be difficult to treat by standard methods of commutative algebra (see \cite{schenck-sidman} for a survey of the commutative algebra of subspace arrangements; we also point out \cite{geramita-weibel, Re, yuzvinsky} for treatments of the Cohen--Macaulayness property of subspace arrangements).
They are motivated by the paper \cite{EGL}, which gives a full answer to Question \ref{whencm}(i) in the special case 
$\lambda=(m^r,1^s)$. The argument of \cite{EGL} uses representation theory of Cherednik algebras, 
while an argument based on classical methods is unknown. This is so even in the case $r=1$, addressed in \cite{BGS}. 
Also, Question \ref{whencm}(ii) for partitions of the form $(m^r,p^s)$ is asked in \cite{SV}. 

We also point out the paper \cite{devadas-sam}, in which the Cohen--Macaulay property of $X_\lambda$ for the special case $\lambda = (m,1^s)$ is used to calculate character formulas for modular irreducible representations of Cherednik algebras. A curious property of these subspace arrangements is that they are defined over $\ZZ$ (and flat over $\ZZ$) but the Cohen--Macaulay property can depend on characteristic (see \cite[Example 5.2]{BGS}). It would be of interest to understand what is failing in small characteristics, but in this paper we only work over characteristic $0$.

\bigskip

While the full answer to either of the Questions \ref{whencm}(i),(ii) is unknown to us, we answer them in many cases, combining 
the methods of representation theory of Cherednik algebras, standard methods of commutative algebra (such as Reisner's theorem, \cite{Re}), and computer calculations in Macaulay2 \cite{macaulay2}.  
In particular, we show that the answer to Questions \ref{whencm}(i),(ii) 
is ``no'' if $\lambda$ has at least four distinct parts, or three distinct parts such that the largest one is not equal to the sum of two smaller ones. 
We also formulate some conjectures for the remaining cases based on computational evidence. Finally, we
settle a conjecture from \cite{SV} (end of Section 4) stating that the algebra generated by deformed Newton sums 
\[
a(y_1^i+\cdots+y_r^i)+(z_1^i+\cdots+z_s^i), \qquad i=1,2,\dots
\]
is CM for generic complex $a$. Finally, we discuss the set of values of $a$ for which this property fails. 

Our main conclusion is that the CM property is rather rarely satisfied for $X_\lambda$ and $X_\lambda/S_n$, and whenever it is, 
there is often some structure behind it, coming from representation theory and integrable systems, which is instrumental in the proof of 
the CM property. Namely, our main results are summarized by the following theorem. 

\begin{theorem}\label{maintheo} 
\begin{enumerate}[\rm 1.]
\item If all parts of $\lambda$ are equal, i.e., $\lambda=(m^r)$, then $X_\lambda$ and hence $X_\lambda/S_n$ is CM (Theorem \ref{cher}). 

\item
\begin{enumerate}[\rm (a)]
\item  If $\lambda=(m^r,1^s)$, where $m>1,r>0,s>0$, then $X_\lambda$ and $X_\lambda/S_n$ are CM if and only if either
$m=2$ or $s<m$ (Theorem \ref{cher}, Proposition \ref{adapt3.11}). 

\item Let $\lambda=(m^r,p^s)$, $m>p>1,r>0,s>0$, and let $m=kb$, $p=kc$, with $k\in \Bbb Z_{>0}$ and $\gcd(b,c)=1$. If $c\le r$, $3\le b\le s$, 
then $X_\lambda/S_n$ and hence $X_\lambda$ is not CM (Proposition \ref{adapt3.11}). 

\item If $c=1$ and $b=2$ or $b>s$ then $X_\lambda/S_n$ 
is CM (Theorem \ref{cher}). 

\item For fixed $r>0,s>0$, $X_\lambda/S_n$ is CM for all but finitely many pairs $(b,c)$ (Theorem \ref{SVconj}). 

\item $X_{(4,2,2)}$ is CM, while $X_{(3^r,2^s)}$ is not CM 
for $r\ge 1,s\ge 1,r+s\ge 3$ and $X_{(5^r,2^s)}$ is not CM 
for $r\ge 1$, $s\ge 2$ (Proposition \ref{422}).  
\end{enumerate}
 
\item 
\begin{enumerate}[\rm (a)]
\item If $\lambda$ has three distinct parts $a>b>c$ and $a\ne b+c$ then $X_\lambda/S_n$ and hence $X_\lambda$ is not CM
(Theorem \ref{noCMinv}). 

\item If $\lambda=(b+c,b^r,c^s)$ then $X_\lambda$ is not CM (Theorem \ref{noCM}).  
\end{enumerate}
 
\item If $\lambda$ has at least four distinct parts, then $X_\lambda/S_n$ and hence $X_\lambda$ is not CM (Theorem \ref{noCMinv}). 
\end{enumerate}
\end{theorem}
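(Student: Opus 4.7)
The plan is to reduce Part~4 to Part~3(a) via a combinatorial step followed by an algebraic transfer.

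\emph{Combinatorial step.} First I show: if $\lambda$ has four distinct parts $\alpha_1 > \alpha_2 > \alpha_3 > \alpha_4$, then some triple $a > b > c$ among them satisfies $a \ne b + c$. Indeed, if every triple satisfied the sum relation, then applying it to $(\alpha_1, \alpha_2, \alpha_3)$ and $(\alpha_1, \alpha_2, \alpha_4)$ would give $\alpha_1 = \alpha_2 + \alpha_3 = \alpha_2 + \alpha_4$, forcing $\alpha_3 = \alpha_4$, a contradiction. Fix such a triple.

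\emph{Algebraic setup.} Let $\lambda^*$ be the partition consisting of just the parts $a, b, c$ with the multiplicities they have in $\lambda$. Recall that $R_\lambda := \CC[X_\lambda/S_n]$ is isomorphic to the subring of $\CC[y_1, \ldots, y_r]$ generated by the deformed Newton sums $p_k = \sum_i \lambda_i y_i^k$ for $k \ge 1$, where the $y_i$ index the distinct coordinate values on $E_\lambda$. Setting to zero the $y_i$'s attached to parts of $\lambda$ outside $\{a, b, c\}$ yields a surjection of graded rings $\phi \colon R_\lambda \twoheadrightarrow R_{\lambda^*}$.

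\emph{Transfer and conclusion.} The crux is to show that if $R_\lambda$ is CM, then so is $R_{\lambda^*}$; combined with Part~3(a)---which says $R_{\lambda^*}$ is not CM because $a \ne b+c$---this would yield that $R_\lambda$ is not CM, i.e., that $X_\lambda/S_n$ is not CM. The claim for $X_\lambda$ itself is then automatic: in characteristic zero, CM of $X_\lambda$ would imply CM of the finite-group quotient $X_\lambda/S_n$. The main obstacle is this transfer step. The naive hope that $\ker \phi$ is a complete intersection of height equal to the dimension drop $c := \dim R_\lambda - \dim R_{\lambda^*}$, so that standard complete-intersection descent applies, already fails for the toy case $\lambda = (2, 1)$, where one can check that $\ker \phi$ has multiple generators. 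A more robust route is to study the long exact sequence of local cohomology attached to $0 \to \ker \phi \to R_\lambda \to R_{\lambda^*} \to 0$, propagating a non-vanishing class in $H^i_\mathfrak{m}(R_{\lambda^*})$ (for some $i < \dim R_{\lambda^*}$) to a non-vanishing class in $H^{i+c}_\mathfrak{m}(R_\lambda)$ that certifies failure of CM. Alternatively, one could sidestep the reduction entirely by applying a Reisner-type criterion to the subspace arrangement $X_\lambda$ directly, showing that its intersection lattice carries non-trivial reduced homology in a low degree whenever a bad triple of distinct parts is present.
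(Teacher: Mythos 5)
Your proposal only addresses Part~4 of the theorem (the other parts are proved elsewhere in the paper by quite different means), so I will review it as an attempted proof of that part. The combinatorial observation --- among any four distinct parts there is a triple $a>b>c$ with $a\ne b+c$ --- is correct and is essentially the same pigeonhole the paper uses. But the algebraic transfer is a genuine gap, and you say so yourself: you never establish that CMness of $R_\lambda$ would force CMness of $R_{\lambda^*}=R_\lambda/\ker\phi$. This implication is false for general graded surjections (a quotient of a CM ring need not be CM), and the two escape routes you sketch do not close the gap. The local cohomology route as described does not work: the long exact sequence of $0\to\ker\phi\to R_\lambda\to R_{\lambda^*}\to 0$ gives maps $H^i_{\mathfrak m}(R_\lambda)\to H^i_{\mathfrak m}(R_{\lambda^*})\to H^{i+1}_{\mathfrak m}(\ker\phi)$, so a nonzero class in $H^i_{\mathfrak m}(R_{\lambda^*})$ with $i<\dim R_{\lambda^*}$ can perfectly well be absorbed by $H^{i+1}_{\mathfrak m}(\ker\phi)$ rather than lift to $R_\lambda$; and there is no mechanism producing the degree shift by $c=\dim R_\lambda-\dim R_{\lambda^*}$ that you would need. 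The Reisner-type alternative is also not available as stated, since $X_\lambda$ is a subspace arrangement rather than a Stanley--Reisner scheme, and in any case it is not carried out.

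The paper's actual reduction is local rather than global, and this is the idea your proposal is missing. Instead of killing the extra variables by a ring surjection, one passes to the formal neighborhood of a well-chosen point of $X_\lambda$ (Proposition~\ref{formneigh}): at the point whose coordinates are $1$ on the blocks of an extra part $\ell$ and $0$ elsewhere, the completed local ring of $X_{\lambda\cup\ell}$ factors as (completed local ring of $X_{\lambda}$ at the origin) $\widehat\otimes$ (a formal disk), \emph{provided} $\ell$ is not a nontrivial sum of the remaining parts. Since CMness is detected on completions of local rings and is preserved and reflected by adjoining a disk factor, non-CMness propagates upward one part at a time. This is why the paper's induction removes the parts one by one and takes care at each step to choose a removable part (either the smallest distinct part $d$ or the next one $c$, depending on whether $a\ne b+c$ or $a\ne b+d$) so that both the summation condition and the ``bad triple'' hypothesis survive. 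Your all-at-once surjection ignores exactly the condition that makes the local splitting valid, and without some such local splitting the reduction does not go through.
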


Unfortunately, the theorem still leaves open infinite families of cases when $\lambda$ has $2$ or $3$ distinct parts. We give some conjectures for next steps in \S\ref{sec:nonCM}.

\subsection*{Supporting files}

A few of the proofs in this paper reduce to a finite computation, which we perform with the computer algebra system Macaulay2 \cite{macaulay2}. We have included the scripts to perform these calculations as ancillary files in the {\tt arXiv} submission of this paper. In the proofs where they are used, we have included a description of the calculations being performed.

\subsection*{Acknowledgments} 
P. E. is grateful to A. Polishchuk for discussions that led to this research, and to 
A. Veselov and M. Feigin for explanations concerning the paper \cite{SV}. We also thank Eric Rains who verified Propositions~\ref{bplus1b} and~\ref{aplus1s2} in this paper using MAGMA. 
The work of P. E. was  partially supported by the NSF grant DMS-1000113. The work of A. B. and D. C. 
was done in the Summer Program of Undergraduate Research at the Mathematics department of MIT. 
The work of S. S. is supported by a Miller research fellowship.

\section{Preliminaries and known results} 

Recall that a finitely generated commutative $\Bbb C$-algebra $R$ (or, equivalently, the scheme $X={\rm Spec}(R)$) 
is Cohen--Macaulay (CM) if $R$ is a finitely generated (locally) free module over some polynomial subalgebra $\Bbb C[u_1,\dots,u_r]\subset R$. 
In this case, $R$ is a (locally) free module over any such subalgebra over which it is finitely generated (as a module). 
For more details about the CM property see \cite[Chapter 18]{Eis}.  

The following theorem was proved in \cite[Proposition 3.11]{EGL}:

\begin{theorem}\label{cher} Suppose that $\lambda=(m^r,1^s)$
(i.e., $r$ copies of $m$ and $s$ copies of $1$). Then $X_\lambda$ is CM if and only if either 
$m>s$, or $m\le 2$. 
\end{theorem}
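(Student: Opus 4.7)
My approach would leverage the representation theory of the rational Cherednik algebra $H_c(S_n)$, since standard commutative-algebra techniques seem poorly adapted to the $S_n$-structure of this arrangement. The plan is to realize the coordinate ring of $X_{(m^r,1^s)}$ as the support of a distinguished $H_c(S_n)$-module at parameter $c = s/m$, and then to transfer Cohen--Macaulayness from the module to the variety.

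More concretely, take the polynomial representation $M_c = \mathbb{C}[x_1, \dots, x_n]$ of $H_c(S_n)$ with Dunkl operators at $c = s/m$. For this parameter one should find a distinguished singular vector (i.e. a polynomial annihilated by every Dunkl operator) generating a proper submodule $J_c$, with irreducible quotient $L_c := M_c / J_c$. The first technical step is to identify the support of $L_c$ as a coherent sheaf on $\mathfrak{h} = \mathbb{C}^n$ with $X_\lambda$: this combines the general classification of supports of irreducibles in category $\mathcal{O}$ (Bezrukavnikov--Etingof, Shan--Vasserot) with an explicit computation that the relevant singular polynomial vanishes on $E_\lambda$, whose $S_n$-translates give $X_\lambda$. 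The second step is to show that $L_c$ is Cohen--Macaulay as a $\mathbb{C}[\mathfrak{h}]$-module (equivalently as a module over the Chevalley invariants, since $\mathbb{C}[\mathfrak{h}]$ is finite free over them). Under the hypothesis $m > s$ or $m \le 2$, the parameter $c = s/m$ should lie in a range where $L_c$ admits a BGG-type resolution in category $\mathcal{O}$ whose length matches $\operatorname{codim} X_\lambda = n - r - s$, giving the required depth bound. Identifying the associated graded of $L_c$ with $\mathcal{O}(X_\lambda)$ (up to killing a regular sequence from the sign/symmetric decomposition) then transfers CM-ness to the variety itself.

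For the ``only if'' direction, when $m \ge 3$ and $m \le s$, I would try to exhibit a nonzero element of local cohomology $H^i_{\mathfrak{m}}(\mathcal{O}(X_\lambda))$ with $i < \dim X_\lambda$, either by degenerating to a monomial arrangement (and invoking Reisner's criterion, cf.\ \cite{Re}) or by reading off the obstruction directly from category $\mathcal{O}$ at $c = s/m \ge 1$, where the homological behavior of $L_c$ degrades. The main obstacle I anticipate is Step 2 above: matching the analytic description of $L_c$ via Dunkl operators and singular vectors against the purely algebraic description of $\mathcal{O}(X_\lambda)$ by its linear defining ideal requires explicit control over the singular polynomials at $c = s/m$, and the precise range $m > s$ versus $m \le s$ must be pinned down from this control. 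Once the dictionary between $L_c$ and $\mathcal{O}(X_\lambda)$ is in place, the CM-ness of $L_c$ (a structural feature of category $\mathcal{O}$ at ``good'' parameters) does most of the work for the positive direction, and the failure at bad parameters should read off the negative direction.
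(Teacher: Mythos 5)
Your overall strategy --- realize $\mathcal{O}(X_\lambda)$ as an irreducible lowest-weight module over the rational Cherednik algebra for the positive direction, and use a local analysis plus Reisner's criterion for the negative one --- is exactly the route of the proof this paper relies on: Theorem~\ref{cher} is not reproved here but quoted from \cite[Proposition 3.11]{EGL}, whose argument combines representation theory of $H_c(S_n)$ with Reisner's theorem. However, two concrete points in your sketch would fail as written. First, the parameter is wrong: you take $c=s/m$, but the support of $L_c(\mathrm{triv})$ is governed by the denominator of $c$ in lowest terms, so at $c=s/m$ you land on the wrong arrangement whenever $\gcd(s,m)\ne 1$ (for $\lambda=(4^r,1^2)$ you would be at $c=1/2$, whose minimal-support variety is built from $2$-equals clumps, not from $(4^r,1^2)$), and at $c=0$ when $s=0$, where $M_c$ is irreducible. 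The correct choice is $c=1/m$: then $\mathrm{supp}\,L_{1/m}(\mathrm{triv})=X_{(m^r,1^s)}$ precisely when $s=n-mr<m$, and since $I(X_\lambda)$ is stable under the Dunkl operators at this parameter, irreducibility forces the maximal proper submodule of $M_{1/m}(\mathrm{triv})$ to equal $I(X_\lambda)$, giving $L_{1/m}(\mathrm{triv})\cong\mathcal{O}(X_\lambda)$ on the nose. This identification is not a side issue: a maximal Cohen--Macaulay module with full support does not imply its support is CM, so ``CM-ness of $L_c$ does most of the work'' only once you know $L_c$ \emph{is} the coordinate ring, not merely a sheaf living on $X_\lambda$.

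Second, your positive direction only covers the minimal-support case $m>s$, whereas the theorem also asserts CM-ness for $m=2$ and arbitrary $s$. For $m=2$, $s\ge 2$, the variety $X_{(2^r,1^s)}$ is \emph{not} the support of $L_{1/2}(\mathrm{triv})$ (that support has more clumps); it is the support of $L_{1/2}(\tau)$ for a nontrivial $\tau$, which is a nontrivial module over $\mathcal{O}(X_\lambda)$ rather than the structure sheaf, so the argument does not transfer verbatim and this case needs separate treatment, as it receives in \cite{EGL}. For the ``only if'' direction your instinct matches \cite{EGL} and the adaptations in Propositions~\ref{formneigh} and~\ref{adapt3.11} of this paper: for $3\le m\le s$ one passes to the formal neighborhood of a point such as $(1,\dots,1,0,\dots,0)$ with exactly $m$ ones, where $X_\lambda$ becomes (a product of a disk with) a union of two branches meeting in codimension at least $2$ in each, so Reisner's theorem or failure of connectedness in codimension one applies; no degeneration to a monomial ideal is needed, and ``reading off the obstruction from category $\mathcal{O}$'' is not how the known proof proceeds.
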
 

The proof is based on the representation theory of rational Cherednik algebras and Reisner's theorem, \cite{Re}. 

Also, note that if $\lambda=(p,q)$ then $X_\lambda$ is automatically CM, since $X_\lambda^0$ is $1$-dimensional. 

Let us now consider the quotient $X_\lambda/S_n$. The algebra of functions on this variety 
is described by the following proposition. 

\begin{proposition}\label{funconinv}
The algebra ${\mathcal O}(X_\lambda/S_n)$ is the subalgebra 
of $\Bbb C[y_1,\dots,y_r]$ generated by the Newton $\lambda$-sums
$$
P_{i,\lambda}(y_1,\dots,y_r):=\lambda_1 y_1^i+\cdots+\lambda_r y_r^i.
$$
\end{proposition}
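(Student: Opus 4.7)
The plan is to identify $\mathcal O(X_\lambda/S_n)$ with $\mathcal O(X_\lambda)^{S_n}$, and then to see this ring simultaneously as a quotient of $\mathbb C[x_1,\dots,x_n]^{S_n}$ and as a subring of $\mathbb C[y_1,\dots,y_r]$ via restriction to $E_\lambda$. Once these two identifications are in place, writing down generators is automatic from the fact that the power sums $p_i=x_1^i+\cdots+x_n^i$ generate $\mathbb C[x_1,\dots,x_n]^{S_n}$.

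More concretely, the first step is to produce a surjection $\mathbb C[x_1,\dots,x_n]^{S_n}\twoheadrightarrow \mathcal O(X_\lambda)^{S_n}$. This follows because the inclusion $X_\lambda\hookrightarrow \mathbb C^n$ gives a surjection $\mathbb C[x_1,\dots,x_n]\twoheadrightarrow \mathcal O(X_\lambda)$, and taking $S_n$-invariants is exact in characteristic $0$ (as $S_n$ is finite, one averages via the Reynolds operator). The second step is to produce the restriction map $\rho:\mathcal O(X_\lambda)^{S_n}\to \mathcal O(E_\lambda)=\mathbb C[y_1,\dots,y_r]$ and show it is injective. Injectivity is the conceptual heart of the argument: if an $S_n$-invariant regular function $f$ on $X_\lambda$ vanishes on $E_\lambda$, then for every $\sigma\in S_n$ the function $f=\sigma\cdot f$ vanishes on $\sigma E_\lambda$ as well, hence on the whole union $X_\lambda=S_n\cdot E_\lambda$.

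The third step is the calculation that on $E_\lambda$, parameterized by $(y_1,\dots,y_r)$ with $y_j$ appearing $\lambda_j$ times, the power sum $p_i$ restricts to
\[
p_i|_{E_\lambda}=\lambda_1 y_1^i+\cdots+\lambda_r y_r^i=P_{i,\lambda}(y_1,\dots,y_r).
\]
Combining the three steps: the composition $\mathbb C[x_1,\dots,x_n]^{S_n}\twoheadrightarrow \mathcal O(X_\lambda)^{S_n}\hookrightarrow \mathbb C[y_1,\dots,y_r]$ has image equal to $\mathcal O(X_\lambda)^{S_n}$ (via the injection $\rho$), and this image is generated by the images of the $p_i$, which are exactly the $P_{i,\lambda}$.

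I do not foresee a serious obstacle here. The only subtlety worth spelling out is that $X_\lambda$ should be taken with its reduced structure (a union of linear subspaces), so that vanishing of functions on the underlying set suffices for vanishing in the ring $\mathcal O(X_\lambda)$; this is what makes the $\sigma$-translate argument for injectivity of $\rho$ legitimate at the level of rings, not merely set-theoretically.
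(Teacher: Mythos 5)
Your proof is correct and takes essentially the same route as the paper's: restrict to $E_\lambda$, observe that restriction is injective on $S_n$-invariants (the paper phrases this as surjectivity of $E_\lambda\to X_\lambda/S_n$, which is the same point as your translate argument), and compute that $p_i$ restricts to $P_{i,\lambda}$. The details you add --- exactness of taking invariants via the Reynolds operator, and the remark about the reduced structure --- are correct justifications of steps the paper leaves implicit.
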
 

\begin{proof} It is easy to see that the map $E_\lambda\to X_\lambda/S_n$ is surjective, so 
the corresponding pullback defines an inclusion of ${\mathcal O}(X_\lambda/S_n)$ into $\Bbb C[E_\lambda]$. 
The ring $\Bbb C[E_\lambda]$ can be realized as $\Bbb C[y_1,\dots,y_r]$, where $y_1$ is the value of $x_i$ for $1\le i\le \lambda_1$,
$y_2$ is the value of $x_i$ for $\lambda_1<i\le \lambda_1+\lambda_2$, etc.  
Now, ${\mathcal O}(X_\lambda/S_n)$ is generated by ordinary power (=Newton) sums 
$p_i=x_1^i+\cdots+x_n^i$. Upon the embedding into $\Bbb C[y_1,\dots,y_r]$, we get $p_i(x)=P_{i,\lambda}(y)$. 
This implies the statement. 
\end{proof}  

\begin{corollary}\label{multiple} $X_\lambda/S_n\cong X_{k\lambda}/S_{kn}$ for any $k\ge 1$, where $k\lambda:=(k\lambda_1,\dots,k\lambda_r)$.
Thus, $X_\lambda/S_n$ is CM if $\lambda=((2k)^r,k^s)$ or $\lambda=((mk)^r,k^s)$ 
if $s<m$. 
\end{corollary}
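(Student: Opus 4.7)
The plan is to unfold the definitions and apply Proposition \ref{funconinv} directly. By that proposition, $\mathcal{O}(X_\lambda/S_n)$ is the subalgebra of $\mathbb{C}[y_1,\dots,y_r]$ generated by the deformed Newton sums $P_{i,\lambda}(y) = \lambda_1 y_1^i + \cdots + \lambda_r y_r^i$ for $i \ge 1$, while $\mathcal{O}(X_{k\lambda}/S_{kn})$ is the subalgebra generated by $P_{i,k\lambda}(y) = k\lambda_1 y_1^i + \cdots + k\lambda_r y_r^i$. The key observation is that $P_{i,k\lambda}(y) = k\cdot P_{i,\lambda}(y)$, so the two families of generators span the same subalgebra of $\mathbb{C}[y_1,\dots,y_r]$ (since $k \ne 0$). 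This gives the isomorphism $X_\lambda/S_n \cong X_{k\lambda}/S_{kn}$.

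For the second assertion, observe that $((2k)^r,k^s) = k\cdot(2^r,1^s)$ and $((mk)^r,k^s) = k\cdot(m^r,1^s)$. The first part of the corollary then reduces the CM question for these partitions to the CM property of $X_{(2^r,1^s)}/S_n$ and $X_{(m^r,1^s)}/S_n$ respectively. By Theorem \ref{cher}, $X_{(m^r,1^s)}$ is CM provided $m \le 2$ or $s < m$, and CM-ness is preserved upon passing to the $S_n$-quotient (as noted in the discussion right after Question \ref{whencm}). Both cases of the conclusion follow.

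There is essentially no obstacle here: once Proposition \ref{funconinv} is in hand, the first claim is a one-line check on generators, and the second claim is an immediate specialization of Theorem \ref{cher} combined with that identification. The only subtle point worth verifying carefully is that scaling each generator by the same nonzero constant $k$ genuinely produces the same subalgebra (and not merely an abstractly isomorphic one), which makes the isomorphism in the first statement actually an equality of subrings of $\mathbb{C}[y_1,\dots,y_r]$ rather than just an abstract ring isomorphism.
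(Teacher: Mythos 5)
Your proposal is correct and follows essentially the same route as the paper: the isomorphism comes from Proposition \ref{funconinv} together with the identity $P_{i,k\lambda}=kP_{i,\lambda}$, and the CM statement then follows from Theorem \ref{cher} (note only that the $P_{i,\lambda}$ are the Newton $\lambda$-sums, not the deformed Newton sums $Q_{r,s,i}$ of the later section).
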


\begin{proof} 
The first statement follows from Proposition \ref{funconinv}, as 
$P_{i,k\lambda}=kP_{i,\lambda}$. The second statement follows from the first one and 
Theorem \ref{cher}.  
\end{proof} 

\begin{remark} 
The variety $Y_\lambda$ in the case $\lambda=(2^2,1^{n-4})$ is considered in \cite{KMSV}. Also, in this paper the authors propose the problem to study $Y_\lambda$ 
for general $\lambda$. 
\end{remark}

Proposition~\ref{funconinv} motivates the following definition. 

\begin{definition} Let $\lambda=(\lambda_1,\dots,\lambda_r)$ be a collection of variables. 
The algebra $A_r$ of Newton $\lambda$-sums is the subalgebra of 
$\Bbb C[\lambda_1^{\pm 1},\dots,\lambda_r^{\pm 1},y_1,\dots,y_r]$ generated over $\Bbb C[\lambda_1^{\pm 1},\dots,\lambda_r^{\pm 1}]$ by 
$P_{i,\lambda}$ for $i=1,2,\dots$ 
\end{definition} 

For any subset $S\subset \lbrace{1,\dots,r\rbrace}$, let $\lambda_S=\sum_{i\in S}\lambda_i$. 
Let $A_{r,\rm loc}=A_r[\lambda_S^{-1},S\ne \emptyset]$ be the localization of $A_r$ 
obtained by inverting $\lambda_S$ for all $S\ne \emptyset$. 

The following proposition is a straightforward generalization of \cite[Theorem 5]{SV}. 

\begin{proposition}\label{fingen}
The algebra $\Bbb C[\lambda_i,\lambda_S^{-1},y_1,\dots,y_r]$ and hence its subalgebra $A_{r,\rm loc}$ are finitely generated modules 
over the subalgebra $\Bbb C[\lambda_i,\lambda_S^{-1},P_{i,\lambda}, i=1,\dots,r]$. In particular, 
$A_{r,\rm loc}$ is a finitely generated algebra. The same statements hold for $\Bbb C[y_1,\dots,y_r]$ and the specialization 
$A_\lambda$ of $A_r$ at any $\lambda\in \Bbb C^r$ such that $\lambda_S\ne 0$ for any nonempty $S$.  
\end{proposition}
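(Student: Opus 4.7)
The plan is to set $R := \Bbb C[\lambda_i, \lambda_S^{-1}]$, $C := R[y_1,\dots,y_r]$, and $B := R[P_{1,\lambda},\dots,P_{r,\lambda}]\subset C$, and to show each $y_j$ is integral over $B$; this forces $C = B[y_1,\dots,y_r]$ to be a finite $B$-module. First, the Jacobian $\det(\partial P_{i,\lambda}/\partial y_j) = r!\,\lambda_1\cdots\lambda_r\prod_{j<k}(y_k-y_j)$ is nonzero in $\mathrm{Frac}(C)$, so $P_{1,\lambda},\dots,P_{r,\lambda}$ are algebraically independent over $R$, and $B$ is a polynomial ring in $r$ variables over $R$, hence Noetherian and finitely generated as a $\Bbb C$-algebra. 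Once the finiteness of $C$ over $B$ is established, the subalgebra $A_{r,\mathrm{loc}}\subset C$ is automatically a finitely generated $B$-module by Noetherianity, hence a finitely generated $\Bbb C$-algebra. The statements for $\Bbb C[y_1,\dots,y_r]$ and the specialization $A_\lambda$ follow by base change along $R\to\Bbb C$ at any $\lambda^{(0)}\in\Bbb C^r$ with $\lambda_S^{(0)}\ne 0$ for all nonempty $S$.

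To show $y_j$ is integral over $B$, I use the classical characterization that the integral closure of a subring $B$ in a field $K$ equals the intersection of all valuation rings of $K$ containing $B$. Let $V$ be any valuation ring of $K := \mathrm{Frac}(C)$ containing $B$ (and hence $R$), with valuation $v$; note $v(\lambda_S) = 0$ for every nonempty $S$ (these are units of $R$) and $v(P_{i,\lambda})\ge 0$ for $i = 1,\dots,r$. Suppose for contradiction that $-d := \min_j v(y_j) < 0$, and put $J = \{j : v(y_j) = -d\}$. Fix $j_0\in J$ and set $\alpha = 1/y_{j_0}$; then $v(\alpha) = d$ and $v(\alpha y_j)\ge 0$ with equality iff $j\in J$. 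Multiplying through by $\alpha^i$ gives
\[
\alpha^i P_{i,\lambda}\,=\,\sum_{j}\lambda_j(\alpha y_j)^i,
\]
whose left side has valuation $\ge id > 0$; reducing modulo the maximal ideal of $V$ to the residue field $\kappa$ therefore yields
\[
\sum_{j\in J}\bar\lambda_j\, w_j^i \,=\, 0\quad\text{in }\kappa,\qquad i = 1,\dots,r,
\]
where $w_j := \overline{\alpha y_j}\in\kappa^\times$. Grouping $J = \bigsqcup_\ell J_\ell$ according to the distinct values $w_{(1)},\dots,w_{(k)}\in\kappa^\times$ taken by the $w_j$, this reads $\sum_\ell \overline{\lambda_{J_\ell}}\,w_{(\ell)}^i = 0$ for $i = 1,\dots,r$. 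The first $k\le r$ of these equations form a linear system whose coefficient matrix has the nonzero Vandermonde-type determinant $w_{(1)}\cdots w_{(k)}\prod_{\ell'<\ell''}(w_{(\ell'')} - w_{(\ell')})$, forcing $\overline{\lambda_{J_\ell}} = 0$ for every $\ell$. But $\lambda_{J_\ell}$ is a unit of $R$, so $v(\lambda_{J_\ell}) = 0$ and $\overline{\lambda_{J_\ell}}\ne 0$: contradiction.

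The main obstacle is the valuation–Vandermonde reduction just sketched; it is exactly the step that exploits both the inversion of every partial sum $\lambda_S$ and the classical Vandermonde nonvanishing, and it is essentially the generalization of \cite[Theorem 5]{SV} that the proposition is claiming. Once $v(y_j)\ge 0$ is known on every valuation ring of $K$ containing $B$, each $y_j$ is integral over $B$, so $C$ is a finite $B$-module, and the remaining assertions (finite generation of $A_{r,\mathrm{loc}}$ as a $\Bbb C$-algebra, and the analogous finiteness after numerical specialization to any $\lambda$ with $\lambda_S\ne 0$) are formal consequences.
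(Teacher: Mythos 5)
Your proof is correct, and it reaches the finiteness statement by a genuinely different mechanism than the paper. The paper's proof reduces everything to showing that the homogeneous system $P_{1,\lambda}=\cdots=P_{r,\lambda}=0$ has only the trivial solution $y=0$ whenever all $\lambda_S\ne 0$ (the passage from this to module-finiteness is left implicit: positive-degree homogeneous polynomials with trivial common zero locus form a homogeneous system of parameters, so finiteness follows from graded Nakayama together with the Nullstellensatz). Its key computation is exactly the one you perform in the residue field: group the coordinates into level sets, rewrite the system with coefficients $\lambda_{S_j}$, and use the nonvanishing of a Vandermonde determinant plus the invertibility of every partial sum $\lambda_S$. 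So the combinatorial heart is shared, but the packaging differs: you verify the valuative criterion for integrality of each $y_j$ over $B$ directly, which is in effect a proof of properness of $\mathrm{Spec}\,C\to\mathrm{Spec}\,B$ that never invokes the grading. Your route buys uniformity over the base ring $R=\Bbb C[\lambda_i,\lambda_S^{-1}]$ --- the Vandermonde argument takes place in an arbitrary residue field $\kappa$, so there is no need to pass from fiberwise statements over closed points to finiteness of the whole family --- at the cost of importing the theorem that the integral closure of $B$ in $\mathrm{Frac}(C)$ is the intersection of all valuation overrings. The remaining deductions (Noetherianity of $B$, finite generation of the submodule $A_{r,\mathrm{loc}}$, and specialization by base change $R\to\Bbb C$) are handled correctly; the Jacobian computation establishing algebraic independence of the $P_{i,\lambda}$ is true but not actually needed for any of them.
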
  

\begin{proof} It suffices to show that if $\lambda_S\ne 0$ for nonempty $S$ then the system of equations 
\[
\lambda_1y_1^i+\cdots+\lambda_ry_r^i=0, \qquad i=1,\dots,r
\]
has only the zero solution. 
Suppose $(y_1,\dots,y_r)$ is a solution. Let $S_j$ be the level sets of the function $y_k$ of $k$, 
and $y_k=z_j$ if $k\in S_j$ (so that $z_j$ are distinct). Then our equations become
\[
\lambda_{S_1}z_1^i+\cdots+\lambda_{S_m}z_m^i=0, \qquad i=1,\dots,r.
\]
Since $m\le r$, by the Vandermonde determinant formula 
this implies that $\lambda_{S_j}z_j=0$ for all $j$. Since $\lambda_{S_j}\ne 0$ for any $j$, we get that $z_j=0$ for all $j$, as desired.  
\end{proof} 

\begin{remark}
\begin{enumerate}
\item If $\lambda_i$ are positive integers (enumerated in decreasing order), then the fact that $A_\lambda$ is a finitely generated algebra 
also follows from the fact that $A_\lambda={\mathcal O}(X_\lambda/S_n)$ and Hilbert's theorem on invariants. 

\item We do not know the smallest $N$ for which the polynomials $P_{i,\lambda}$ for $i\le N$ generate $A_\lambda$ for generic $\lambda$ (as a module over $\Bbb C[P_{1,\lambda},\dots, P_{r,\lambda}]$ or as a ring). 

\item  If $\lambda_S=0$ for some $S\ne \emptyset$ (but $\lambda_i\ne 0$ for all $i$) then $A_\lambda$ is not finitely generated. 
To see this, let us first see that $A_{1,-1}$ is not finitely generated. 
This algebra is generated by the polynomials $F_i:=y^i-z^i$, $i=1,2,\dots$. 
Setting $y=z+w$, we find that $F_i=z^{i-1}w+ \cdots$. So in each monomial in $F_i$, 
the $z$-degree is at most $i-1$ times the $w$-degree, which implies that $F_{i+1}$ 
cannot be expressed via $F_1,\dots,F_i$, i.e., $A_{1,-1}$ is infinitely generated. 
Now, if $\lambda_{i_1}+\cdots+\lambda_{i_k}=0$, we can set 
$x_{i_1}=x_{i_2}=\cdots=x_{i_{k-1}}=y$, $x_{i_k}=z$, and all other $x_i=0$, 
and obtain $A_{1,-1}$ as a quotient of $A_\lambda$, 
which implies the statement. 

For a similar reason, in this case $A_\lambda$ is not Noetherian (the infinite chain of ideals $I_m:=(F_1,F_2,\dots,F_m)$ in $A_{1,-1}$ is 
strictly ascending) and thus not CM.
\end{enumerate}
\end{remark} 

Assume that $\lambda_S\ne 0$ for $S\ne \emptyset$, let $Y_\lambda={\rm Spec}(A_\lambda)$ be the variety corresponding to $A_\lambda$, 
and let $S_\lambda$ be the subgroup of $S_r$ 
preserving $\lambda$. Then by Proposition \ref{fingen}, we have a natural finite morphism $\pi \colon \Bbb C^r/S_\lambda\to Y_\lambda$, 
induced by the embedding $A_\lambda\subset \Bbb C[y_1,\dots,y_r]$. It is easy to see that $\pi$ is surjective (as it is finite and dominant), and birational 
(i.e., is a normalization morphism).\footnote{To see that $\pi$ is birational, note that when $y_i$ are distinct, the function $\sum_{i=0}^\infty P_{i,\lambda}t^i=\sum_j\frac{\lambda_j}{1-y_jt}$ determines $y_j$ uniquely up to the action of $S_\lambda$.} On the other hand, $\pi$ is not always injective, e.g., for $r=3$ and $\lambda=(3,2,1)$ (so that $S_\lambda=1$), the points $(t,0,0)$ and $(0,t,t)$ in $\Bbb C^3$ have the same image under $\pi$ (as the polynomials $P_{i,\lambda}$ have the same values at these points).   

The following question generalizes Question \ref{whencm}(ii):

\begin{question}\label{whencm1} For which $\lambda\in \Bbb C^n$ with $\lambda_S\ne 0$, $S\ne \emptyset$ is the variety $Y_\lambda$ (or, equivalently, the algebra $A_\lambda$) CM? 
\end{question} 

Let $A_\lambda^0$ be the quotient of $A_\lambda$ by the ideal generated by $P_{1,\lambda}$, and $Y_\lambda^0={\rm Spec}(A_\lambda^0)$. By using the variables 
$$
\bar y_i=y_i-\frac{\sum \lambda_iy_i}{\sum \lambda_i}
$$
we see that $Y_\lambda=Y_\lambda^0\times\Bbb C$, so $Y_\lambda$ is CM if and only if so is $Y_\lambda^0$. 
Thus, for $n=2$, $Y_\lambda$ is always CM, since $Y_\lambda^0$ is a curve. 

\section{The formal neighborhoods method}

In spite of the results of the previous section, it turns out that $X_\lambda$ and 
even $X_\lambda/S_n$ is rather rarely CM. Let us discuss a method of proving that 
these varieties are not CM for particular $\lambda$ -- the method of formal neighborhoods. 

This method is adopted from the proof of \cite[Proposition 3.11]{EGL}. Namely, arguing as in this proof, one can show: 

\begin{proposition}\label{formneigh} Suppose that $\ell$ cannot be nontrivially written as a sum of some parts of $\lambda$ 
(for example, $\ell\le \lambda_r$). Then if $X_\lambda\subset \Bbb C^n$ is not CM, then $X_{\lambda\cup \ell}\subset \Bbb C^{n+\ell}$ is not CM either.   
The same applies to $X_\lambda/S_n$ and $X_{\lambda\cup \ell}/S_{n+\ell}$. 
\end{proposition}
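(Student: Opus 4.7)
The plan is to study $X_{\lambda\cup\ell}$ in the formal neighborhood of the point
\[
p=(\underbrace{0,\dots,0}_{n},\underbrace{c,\dots,c}_{\ell})\in\mathbb{C}^{n+\ell},\qquad c\ne 0,
\]
where the last $\ell$ coordinates correspond to the newly appended part $\ell$. The goal is to show that the formal neighborhood of $X_{\lambda\cup\ell}$ at $p$ is isomorphic to $\widehat{\mathcal{O}}_{X_\lambda,0}\,\widehat{\otimes}\,\mathbb{C}[[t]]$, and then to transfer (non-)Cohen--Macaulayness across this product decomposition.

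First I would enumerate the irreducible components of $X_{\lambda\cup\ell}$ passing through $p$. A translate $\sigma\cdot E_{\lambda\cup\ell}$ contains $p$ iff every block of the associated partition of $\{1,\dots,n+\ell\}$ consists of indices on which $p$ is constant. Since $p$ takes only two distinct values, each block must lie entirely in $\{1,\dots,n\}$ or in $\{n+1,\dots,n+\ell\}$. The parts of $\lambda\cup\ell$ assigned to $\{n+1,\dots,n+\ell\}$ must sum to $\ell$, and by the nontriviality hypothesis the only such expression is the single part $\ell$; hence those indices form exactly one block of size $\ell$. The remaining first $n$ indices are then partitioned into blocks of sizes $\lambda_1,\dots,\lambda_r$, giving precisely the components of $X_\lambda\subset\mathbb{C}^n$ through the origin. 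Consequently, in a Zariski neighborhood of $p$, $X_{\lambda\cup\ell}$ is contained in the linear subspace $L=\{x_{n+1}=\cdots=x_{n+\ell}\}\cong\mathbb{C}^{n}\times\mathbb{C}$ and coincides there with $X_\lambda\times\mathbb{A}^1$. Passing to completions this gives
\[
\widehat{\mathcal{O}}_{X_{\lambda\cup\ell},p}\;\cong\;\widehat{\mathcal{O}}_{X_\lambda,0}\,\widehat{\otimes}\,\mathbb{C}[[t]].
\]

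Since a Noetherian local ring is CM iff its completion is CM, and $R\,\widehat{\otimes}\,\mathbb{C}[[t]]$ is CM iff $R$ is CM, one obtains that $X_{\lambda\cup\ell}$ is CM at $p$ iff $X_\lambda$ is CM at $0$. Both varieties are non-negatively graded cones (unions of linear subspaces through the origin), so being CM at the origin is equivalent to being globally CM; contrapositively, if $X_\lambda$ is not CM, then $X_{\lambda\cup\ell}$ is not CM at $p$, hence not CM. For the quotient statement the stabilizer of $p$ in $S_{n+\ell}$ is $S_n\times S_\ell$: $S_n$ acts on the $X_\lambda$ factor as usual, while $S_\ell$ acts trivially on the local product since it only permutes coordinates that are constant along the diagonal $\mathbb{A}^1$. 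Taking invariants (which, for a finite group acting on a Noetherian local ring in characteristic $0$, commutes with completion) yields
\[
\widehat{\mathcal{O}}_{X_{\lambda\cup\ell}/S_{n+\ell},[p]}\;\cong\;\widehat{\mathcal{O}}_{X_\lambda/S_n,[0]}\,\widehat{\otimes}\,\mathbb{C}[[t]],
\]
and exactly the same argument transfers (non-)CM across this identification.

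The main obstacle is the combinatorial step of identifying the components of $X_{\lambda\cup\ell}$ through $p$: it is precisely here that the nontriviality hypothesis on $\ell$ is used, to preclude any refinement in which the last $\ell$ indices are split among parts drawn from $\lambda$. The remaining ingredients are standard: CM is preserved under completion and formal product with a regular factor, invariants under finite groups commute with completion in characteristic $0$, and for a graded ring $R$ with $R_0=\mathbb{C}$ the CM property at the graded maximal ideal is equivalent to global CM.
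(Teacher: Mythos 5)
Your proposal is correct and follows essentially the same route as the paper: the paper also works at the point whose coordinates take exactly two values (with $\ell$ coordinates on the new part), observes that the hypothesis on $\ell$ forces those $\ell$ coordinates to remain equal under deformation inside $X_{\lambda\cup\ell}$, identifies the formal neighborhood with that of $0$ in $X_\lambda$ times a formal disk, and concludes via the fact that a local ring is CM iff its completion is. Your write-up simply makes explicit the component enumeration and the invariant-theoretic step for the quotient, which the paper leaves as ``the same applies.''
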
  

\begin{proof} Consider the point $x$ in $X_{\lambda\cup \ell}$ given by $x=(1,\dots,1,0,\dots,0)$, where the number of ones is $\ell$. It is easy to see that the formal neighborhood of $x$ in $X_{\lambda\cup\ell}$ looks like the product of the formal neighborhood of zero in $X_\lambda$ with a formal disk (since, by the assumption on $\lambda$ and $\ell$,  if the coordinates of $x$ vary so that the point remains in $X_{\lambda\cup \ell}$ then the first $\ell$ coordinates must remain equal). The same applies to $X_\lambda/S_n$ and $X_{\lambda\cup \ell}/S_{n+\ell}$. This implies the statement since a local ring is CM if and only if its completion is CM \cite[Proposition 18.8]{Eis}.
\end{proof} 

Let us now adapt the formal neighborhoods method to the variety $Y_\lambda$ for complex $\lambda$ (with $\lambda_S\ne 0$ for $S\ne \emptyset$). 
Let $\lambda'=(\lambda_1,\dots,\lambda_{r-1})$. 

\begin{proposition}\label{formneighy} Assume that $\lambda_r$ is not a sum of a subset of $\lambda_1,\dots\lambda_{r-1}$. Then the formal neighborhood of 
the point $\pi(0,\dots,0,1)$ in $Y_\lambda$ is isomorphic to the product of the formal neighborhood of $\pi(\bold 0)$ in 
$Y_{\lambda'}$ with a $1$-dimensional formal disk. Thus, if $Y_{\lambda'}$ is not CM then 
$Y_\lambda$ is not CM either.   
\end{proposition}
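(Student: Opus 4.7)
The plan mirrors the proof of Proposition~\ref{formneigh}: exhibit the formal neighborhood of $\pi(0,\dots,0,1) \in Y_\lambda$ as the product of the formal neighborhood of $\pi(\mathbf{0}) \in Y_{\lambda'}$ with a one-dimensional formal disk, then appeal to \cite[Proposition 18.8]{Eis} together with the fact that tensoring with the regular local ring $\mathbb{C}[[u]]$ preserves the failure of Cohen--Macaulayness. Writing $\mathfrak m$ and $\mathfrak m'$ for the maximal ideals at $\pi(0,\dots,0,1)$ and $\pi(\mathbf{0})$ respectively, the target is an isomorphism $\hat A_{\lambda,\mathfrak m} \cong \hat A_{\lambda',\mathfrak m'} \,\hat\otimes_{\mathbb{C}}\, \mathbb{C}[[u]]$.

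First I would identify the fiber $\pi^{-1}(\pi(0,\dots,0,1))$ as a single $S_\lambda$-orbit. Using the rational function description of $\pi$ from the footnote, any preimage $y' \in \mathbb{C}^r$ must take values only in $\{0,1\}$, with index set $T := \{j : y'_j = 1\}$ satisfying $\lambda_T = \lambda_r$. The running assumption $\lambda_S \neq 0$ for nonempty $S$ rules out $r \in T$ with $T \neq \{r\}$ (else $\lambda_{T \setminus \{r\}} = 0$), while the hypothesis on $\lambda_r$ rules out $T \subset \{1,\dots,r-1\}$. So $T = \{r\}$, and the stabilizer of $(0,\dots,0,1)$ in $S_\lambda$ is exactly $S_{\lambda'}$.

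Next, making the substitution $u = y_r - 1$ and expanding,
\[
P_{i,\lambda}(y_1,\dots,y_{r-1},1+u) \;=\; P_{i,\lambda'}(y_1,\dots,y_{r-1}) + \lambda_r(1+u)^i,
\]
which defines a ring map $\psi : A_\lambda \to A_{\lambda'}[u]$ sending $P_{i,\lambda} \mapsto P_{i,\lambda'} + \lambda_r(1+u)^i$. Since $\psi(\mathfrak m) \subset \mathfrak m' + (u)$, one obtains the completed map $\hat\psi : \hat A_{\lambda,\mathfrak m} \to \hat A_{\lambda',\mathfrak m'} \,\hat\otimes\, \mathbb{C}[[u]]$; injectivity is automatic since $\psi$ arises from the linear change of variable $y_r = 1 + u$ in $\mathbb{C}[y_1,\dots,y_r]$.

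The key step, and the main obstacle, is showing that $\hat\psi$ is surjective. The hypothesis that $\lambda_r$ is not a subset sum of $\lambda_1,\dots,\lambda_{r-1}$ (together with $\lambda_r \neq 0$) ensures that any formal deformation of $(0,\dots,0,1)$ compatible with the fiber structure of $\pi$ preserves $y_r$ as a single parameter near $1$, so the $u$-direction and the $(y_1,\dots,y_{r-1})$-direction decouple formally. Concretely, using $\lambda_r \neq 0$ one inverts the displayed identity by a formal implicit-function argument, expressing $u$ and each $P_{i,\lambda'}$ as formal power series in the generators $\psi(P_{i,\lambda}) - \lambda_r$, and this yields surjectivity. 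In Proposition~\ref{formneigh} the corresponding decomposition was transparent since $X_{\lambda \cup \ell}$ is a union of linear subspaces, but for $Y_\lambda$ the absence of linear structure is exactly why this step requires real work. With the isomorphism in hand the CM statement transfers: if $Y_{\lambda'}$ is not CM then $\hat A_{\lambda',\mathfrak m'}$ is not CM, so $\hat A_{\lambda',\mathfrak m'} \,\hat\otimes\, \mathbb{C}[[u]] \cong \hat A_{\lambda,\mathfrak m}$ is not CM, and hence $Y_\lambda$ is not CM.
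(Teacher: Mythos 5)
Your overall architecture matches the paper's: identify the set-theoretic fiber of $\pi$ over $\pi(0,\dots,0,1)$ (your argument here is correct, and is the paper's argument spelled out in more detail), substitute $y_r=1+u$ to get $P_{i,\lambda}=P_{i,\lambda'}+\lambda_r(1+u)^i$, and reduce everything to showing that $u$ lies in the completed local ring of $Y_\lambda$ at this point, i.e., that $u$ is a formal power series in the $Z_i:=P_{i,\lambda}-\lambda_r$. But the step you yourself flag as ``the key step'' is exactly where your argument has a genuine gap. You cannot ``invert the displayed identity by a formal implicit-function argument'': the system $Z_i=P_{i,\lambda'}+\lambda_r((1+u)^i-1)$, $i=1,\dots,N$, viewed as equations in the $N+1$ quantities $u,P_{1,\lambda'},\dots,P_{N,\lambda'}$, is underdetermined; its Jacobian at the origin (namely $\partial Z_i/\partial P_{j,\lambda'}=\delta_{ij}$ and $\partial Z_i/\partial u=i\lambda_r$) has a one-dimensional kernel. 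Worse, if the $P_{i,\lambda'}$ were algebraically independent, then $u$ would \emph{not} be a power series in the $Z_i$ (the specialization $P_{i,\lambda'}\mapsto -\lambda_r((1+u)^i-1)$ kills every $Z_i$ while leaving $u$ free), so no argument using only these identities can succeed.

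The missing ingredient is Proposition \ref{fingen}: for some $N$ there is a quasi-homogeneous relation $P_{N,\lambda'}=F(P_{1,\lambda'},\dots,P_{N-1,\lambda'})$ in $A_{\lambda'}$, in which every monomial of $F$ has at least two factors. Substituting $P_{i,\lambda'}=Z_i-\lambda_r((1+u)^i-1)$ into this relation eliminates the unknowns $P_{i,\lambda'}$ and yields a single equation of the form $Z_N-\lambda_r Nu=(\text{terms of order at least }2\text{ in }u\text{ and the }Z_i)$, which---since $\lambda_r\ne 0$---the formal implicit function theorem does solve for $u$ as a power series in $Z_1,\dots,Z_N$. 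This is precisely what the paper does. Note also that this analytic step uses only $\lambda_r\ne 0$; the subset-sum hypothesis enters only in the identification of the fiber, which you handled correctly. With the relation from Proposition \ref{fingen} inserted at the key step, the rest of your write-up (injectivity of the completed map, and the transfer of non-CM-ness across the product with $\CC[[u]]$ via \cite[Proposition 18.8]{Eis}) goes through.
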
 

\begin{proof} Let $y_r=1+u$, and let $y_1,\dots,y_{r-1},u$ be formal variables. 
We have equations 
$$
P_{i,\lambda'}(y_1,\dots,y_{r-1})=Z_i-\lambda_r((1+u)^i-1),
$$
where $Z_i:=P_{i,\lambda}(y_1,\dots,y_r)-\lambda_r$, formal functions on $Y_\lambda$ near $\pi(0,\dots,0,1)$ 
vanishing at that point. We claim that from these equations we can express $u$ as a formal series 
of the $Z_i$. Indeed, by Proposition \ref{fingen}, for some $N$ we have 
$$
P_{N,\lambda'}=F(P_{1,\lambda'},\dots,P_{N-1,\lambda'}),
$$
where $F$ is a quasi-homogeneous polynomial of degree $N$ (with $\deg P_{i,\lambda'}=i$).
Thus, 
$$
Z_N-\lambda_r((1+u)^N-1)=F(Z_1-\lambda_ru,\dots,Z_{N-1}-\lambda_r((1+u)^{N-1}-1)).
$$
This equation has the form $Z_N-\lambda_r Nu=\cdots$, where $\cdots$ stands for quadratic and higher terms in $u$ and $Z_i$. 
Such an equation can clearly be solved for $u$ in the form of a power series in the $Z_i$, as claimed (note that we have not yet used the condition on $\lambda$). 

Now it remains to note that if $\lambda_r$ is not a sum of a subset of $\lambda_1,\dots,\lambda_{r-1}$, 
then the equation $\pi(\bold x)=\pi(0,\dots,0,1)$ has a unique solution $\bold x=(0,\dots,0,1)$. Indeed, near this point the 
value of $y_r$ can be uniquely read off from the function 
$$
\sum_{i=0}^\infty P_{i,\lambda}(y)t^i=\sum_{j=1}^r \frac{\lambda_j}{1-y_jt}.
$$
This together with the above implies the statement.  
\end{proof} 

\begin{remark}
Note that if $\lambda_i$ are positive integers, Proposition \ref{formneighy} reduces to Proposition \ref{formneigh} for $X_\lambda/S_n$. 
\end{remark} 

\section{The algebra of deformed Newton sums}

Let $a$ be a variable. 

\begin{definition} The algebra $\Lambda_{r,s}$ of deformed Newton sums 
is the subalgebra of 
\[
\Bbb C[a^{\pm 1},y_1,\dots,y_r,z_1,\dots,z_s]
\]
generated over $\Bbb C[a^{\pm 1}]$ by the {\it deformed Newton sums} 
$$
Q_{r,s,i}:=a(y_1^i+\cdots+y_r^i)+(z_1^i+\cdots+z_s^i). 
$$
The localized algebra $\Lambda_{r,s,\rm loc}$ is the localization $\Lambda_{r,s,\rm loc}:=\Lambda_{r,s}\otimes_{\Bbb C[a^{\pm 1}]}K$, where 
\[
K:=\Bbb C[a^{\pm 1},(a+p/q)^{-1} \mid 1\le p\le s,\ 1\le q\le r].
\] 
\end{definition}

\begin{proposition}[{\cite[Theorem 5]{SV}}] \label{fingen1} 
The algebra $K[y_1,\dots,y_r,z_1,\dots,z_s]$ and 
hence its subalgebra $\Lambda_{r,s,\rm loc}$ are finitely generated modules 
over $K[Q_{r,s,i}, i=1,\dots,r+s]$. In particular, $\Lambda_{r,s,\rm loc}$
 is a finitely generated algebra. The same statements hold for $\Bbb C[y_1,\dots,y_r,z_1,\dots,z_s]$ and the 
specialization $\Lambda_{r,s,a}$ of $\Lambda_{r,s}$ at any $a\in \Bbb C^*$ such that 
$a\ne -p/q$, $1\le p\le s$, $1\le q\le r$.
\end{proposition}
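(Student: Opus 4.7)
The plan is to follow the proof of Proposition~\ref{fingen} essentially verbatim, with the specialization $\lambda = (\underbrace{a,\dots,a}_{r},\underbrace{1,\dots,1}_{s})$ in mind. Under this specialization $\lambda_S = aq + p$, where $q$ and $p$ count the $y$-indices and $z$-indices in $S$, and the condition $\lambda_S \ne 0$ for all nonempty $S$ translates exactly to $a \ne 0$ and $a \ne -p/q$ for $1 \le p \le s$, $1 \le q \le r$, matching the definition of $K$. Accordingly, the key step is to show that, at every closed point $a_0 \in \mathrm{Spec}\,K$, the system
\[
a_0(y_1^i+\cdots+y_r^i) + (z_1^i+\cdots+z_s^i) = 0, \qquad i=1,\dots,r+s,
\]
has only the trivial solution in $\CC^{r+s}$. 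Module-finiteness of $K[y,z]$ over $K[Q_{r,s,i}]$ then follows from this together with the fact that the homogeneous $Q_{r,s,i}$ cut out a projective subscheme of $\mathbb{P}^{r+s-1}_K$ whose closed fibers over the Jacobson ring $\mathrm{Spec}\,K$ are all empty, hence the subscheme itself is empty and some $(y,z)^N \subset (Q_{r,s,1},\dots,Q_{r,s,r+s})$.

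To verify the fiberwise claim, I list the distinct values taken by the coordinates of a hypothetical solution as $\xi_1,\dots,\xi_M$ and set $q_k=\#\{i:y_i=\xi_k\}$, $p_k=\#\{j:z_j=\xi_k\}$; then $M\le r+s$, $(q_k,p_k)\ne(0,0)$, and the system rewrites as
\[
\sum_{k=1}^M (a_0 q_k + p_k)\,\xi_k^i = 0, \qquad i=1,\dots,r+s.
\]
The term with $\xi_k=0$ contributes nothing for $i\ge 1$, so Vandermonde applied to the remaining distinct nonzero $\xi_k$ forces $a_0 q_k + p_k = 0$ for each such $k$. I then rule this out case by case: $q_k = 0$ would force $p_k = 0$, a contradiction; $q_k \ge 1$ with $p_k = 0$ gives $a_0 = 0$, excluded; and $q_k \ge 1$ with $p_k \ge 1$ gives $a_0 = -p_k/q_k$ with $1 \le p_k \le s$ and $1 \le q_k \le r$, exactly the values forbidden by our assumption. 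Hence no nonzero $\xi_k$ exists and $(y,z) = (0,0)$.

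The remaining assertions are then routine. The subalgebra $\Lambda_{r,s,\rm loc}$ is a submodule of the finite module $K[y,z]$ over the Noetherian ring $K[Q_{r,s,i}]$, hence is itself a finite module and in particular a finitely generated $K$-algebra. The specialization statements for $\Lambda_{r,s,a}$ at a single admissible $a$ are proved by the identical Vandermonde argument, with the projective-scheme step replaced by the ordinary graded Nullstellensatz over $\CC$. The main obstacle is not the Vandermonde computation, which is straightforward once the weights $a_0 q_k + p_k$ are identified, but rather the passage from pointwise vanishing at each closed point of $\mathrm{Spec}\,K$ to a uniform module-finiteness statement over $K$; this subtlety is absent in the specialization version, where the argument becomes a direct translation of the proof of Proposition~\ref{fingen}.
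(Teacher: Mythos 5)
Your proposal is correct and takes essentially the same route as the paper, which simply observes that the statement is the special case $\lambda=(a^r,1^s)$ of Proposition~\ref{fingen}; your Vandermonde argument with weights $a_0q_k+p_k$ is exactly the specialization of that proof. The extra care you take about passing from closed fibers of $\mathrm{Spec}\,K$ to module-finiteness over $K$ is a reasonable elaboration of a point the paper leaves implicit, but it does not change the substance of the argument.
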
 

\begin{proof} This is a special case of Proposition \ref{fingen}, for $\lambda=(a^r,1^s)$.  
\end{proof} 

\begin{remark}
Note that $\Lambda_{r,s,a}=A_{(a^r,1^s)}$. 
\end{remark}

The following theorem was conjectured by Sergeev and Veselov in \cite{SV} (end of \S 4). 

\begin{theorem}\label{SVconj} The algebra $\Lambda_{r,s,a}$ is Cohen--Macaulay for all but finitely many values of $a\in \Bbb C$. 
\end{theorem}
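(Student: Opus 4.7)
The plan is to promote the integer case of Theorem~\ref{cher} to the whole affine parameter line, using openness of the fiberwise Cohen--Macaulay locus in a flat family and the observation that a constructible subset of a one-dimensional irreducible scheme is either finite or cofinite.

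First, I would set up the family over the base ${\rm Spec}(K)$. Since $K$ is a principal ideal domain (a localization of $\CC[a]$) and $\Lambda_{r,s,\rm loc}$ sits inside $K[y_1,\dots,y_r,z_1,\dots,z_s]$, it is torsion-free over $K$ and hence flat. By Proposition~\ref{fingen1}, $\Lambda_{r,s,\rm loc}$ is a finitely generated $K$-algebra, so the structure map $f \colon X = {\rm Spec}(\Lambda_{r,s,\rm loc}) \to {\rm Spec}(K)$ is flat and of finite type. Applying the standard openness of the relative Cohen--Macaulay locus for such morphisms (as in EGA~IV, \S 12), the locus $U \subseteq X$ of points at which the fiber of $f$ is CM at that point is open. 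Its complement $Z = X \setminus U$ is closed, and by Chevalley's constructibility theorem $f(Z)$ is constructible in ${\rm Spec}(K)$; since being CM is a local property, $f(Z)$ is exactly the set of $a_0 \in {\rm Spec}(K)$ for which the fiber $\Lambda_{r,s,a_0}$ fails to be Cohen--Macaulay.

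Second, ${\rm Spec}(K)$ is a one-dimensional Noetherian irreducible scheme, so every constructible subset is either finite or cofinite. To rule out the cofinite case I would invoke Theorem~\ref{cher} at infinitely many integer specializations: for every integer $m > s$ the variety $X_{(m^r,1^s)}$ is Cohen--Macaulay, and hence so is its $S_n$-quotient $X_{(m^r,1^s)}/S_n$ (CM passes to quotients by finite groups in characteristic zero, by Hochster--Eagon). Thus $\Lambda_{r,s,m} = A_{(m^r,1^s)} = \mathcal{O}(X_{(m^r,1^s)}/S_n)$ is CM for each such $m$. This produces infinitely many CM fibers of $f$, so the non-CM locus cannot be cofinite in ${\rm Spec}(K)$ and is therefore finite. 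Combining with the finitely many values removed in defining $K$ (namely $a = 0$ and $a = -p/q$ for $1 \le p \le s,\ 1 \le q \le r$), the total non-CM set in $\CC$ is finite.

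The main obstacle is the appeal to openness of the relative Cohen--Macaulay locus, since the paper so far has not invoked this kind of scheme-theoretic machinery; one must also be slightly careful that ``fiber CM at $x$'' glues up via Chevalley to the correct set on the base, which is immediate since CM-ness of a Noetherian ring is tested at its maximal ideals. An alternative worth exploring is to prove directly that $\Lambda_{r,s,\rm loc}$ is a Cohen--Macaulay $K$-algebra --- combined with flatness over the regular one-dimensional base this would give CM at every closed fiber and sharpen the theorem to ``CM for every $a$ at which $\Lambda_{r,s,a}$ is defined'' --- but establishing CM of the total space looks harder without further structural input.
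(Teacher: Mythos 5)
Your proposal is correct and follows essentially the same route as the paper: flatness of the family over the parameter line (which the paper phrases via constancy of the Hilbert series from Proposition~\ref{fingen1}), Cohen--Macaulayness at the infinitely many integer specializations $a>s$ via Theorem~\ref{cher}, and openness of the CM locus in a flat family to conclude that the non-CM set is a proper, hence finite, subset of the line. You simply spell out the scheme-theoretic details (Chevalley constructibility, finite-or-cofinite on a curve) that the paper leaves implicit.
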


\begin{proof} By Proposition \ref{fingen1}, $\Lambda_{r,s,a}$ has the same Hilbert series 
for almost all values of $a$. Also, by Theorem \ref{cher}, it is CM for integers $a>s$
(as in this case by Proposition \ref{funconinv}, $\Lambda_{r,s,a}={\mathcal O}(X_\lambda/S_n)$, where 
$\lambda=(a^r,1^s)$). Since in flat families, CM-ness is an open condition, we conclude that 
$\Lambda_{r,s,a}$ is CM for all but finitely many values of $a$, as desired.    
\end{proof} 

\begin{remark}\label{hilser} It is shown in \cite[Theorem 3]{SV} that the Hilbert series of $\Lambda_{r,s,a}$ for generic $a$ is 
$$
h_{r,s}(t)=\frac{1}{(1-t)\cdots(1-t^r)} \sum_{i=0}^s \frac{t^{i(r+1)}}{(1-t)\cdots(1-t^i)}.
$$
Therefore, by Theorem \ref{SVconj}, for generic $a$ the Hilbert series of the generators 
of $\Lambda_{r,s,a}$ as a module over $\Bbb C[Q_{r,s,i},i=1,\dots,r+s]$ 
is 
$$
\widetilde{h}_{r,s}(t)=(1-t^{r+1})\cdots(1-t^{r+s})\sum_{i=0}^s \frac{t^{i(r+1)}}{(1-t)\cdots(1-t^i)}.
$$ 
For instance, if $s=1$, we get 
\[
\widetilde{h}_{r,1}(t)=(1-t^{r+1})(1+\frac{t^{r+1}}{1-t})=1+t^{r+2}+\cdots+t^{2r+1}
\]
(see \cite{SV}, end of \S 4). 
\end{remark} 

Now we would like to study the set 
\[
B(r,s)\subset \Bbb C^*\setminus\lbrace{-p/q \mid 1\le p\le s,\ 1\le q\le r\rbrace}
\]
of ``exceptional'' values of $a$, 
for which $\Lambda_{r,s,a}$ fails to be CM (clearly, they may be nonempty only if $r+s\ge 3$). It is clear that $B(r,s)=B(s,r)^{-1}$, as 
$\Lambda_{r,s,a}=\Lambda_{s,r,a^{-1}}$.

\begin{proposition}\label{incl} If $a\in B(r,s)$ and $a\ne -\frac{p}{r+1}$ with $1\le p\le s$ then $a\in B(r+1,s)$. 
Similarly, if $a\in B(r,s)$ and $a\ne -\frac{s+1}{q}$ with $1\le q\le r$ then $a\in B(r,s+1)$. 
\end{proposition}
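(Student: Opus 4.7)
The plan is to apply a formal-neighborhoods argument in the spirit of Proposition~\ref{formneighy}, suitably adapted to partitions in which the distinguished entry $\lambda_r$ appears with multiplicity greater than one. First observe that by the identification $\Lambda_{r,s,a}\cong\Lambda_{s,r,a^{-1}}$ (swap the $y$'s with the $z$'s and replace $a$ by $a^{-1}$; the new generators equal the $Q_{r,s,i}$ up to the unit $a$), we have $B(r,s)=B(s,r)^{-1}$, so the second assertion follows from the first by symmetry. Focus then on the first. Set $\lambda=(a^{r+1},1^s)$ and $\lambda'=(a^r,1^s)$, so that $A_\lambda=\Lambda_{r+1,s,a}$ and $A_{\lambda'}=\Lambda_{r,s,a}$. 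Let $\mathbf x_0\in\CC^{r+s+1}$ be the point with one coordinate at an $a$-position equal to $1$ and the rest zero, and put $p_0=\pi(\mathbf x_0)\in Y_\lambda$. The aim is the formal factorization
\[
\widehat{A}_{\lambda,\,p_0}\;\cong\;\widehat{A}_{\lambda',\,\pi'(\mathbf 0)}\,\widehat\otimes_\CC\,\CC[[u]],
\]
which propagates non-CMness at $\pi'(\mathbf 0)$ (ensured by $a\in B(r,s)$) to non-CMness at $p_0$, giving $a\in B(r+1,s)$. The extra hypothesis $a\ne -p/(r+1)$ serves only to place $a$ inside the domain $\CC^*\setminus\{-p/q:p\le s,q\le r+1\}$ on which $B(r+1,s)$ is defined.

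The first step is to verify that $\pi^{-1}(p_0)\subset\CC^{r+s+1}/S_\lambda$ is the single $S_\lambda$-orbit of $\mathbf x_0$. By the partial-fraction argument in the proof of Proposition~\ref{fingen}, a competing preimage corresponds to a subset $S_1\subset\{1,\dots,r+s+1\}$ with $\sum_{j\in S_1}\lambda_j=a$ of type $(j,k)=(|S_1\cap\text{$a$-positions}|,|S_1\cap\text{$1$-positions}|)\ne(1,0)$; the equation $ja+k=a$ then forces either $a=-p/q$ for $1\le p\le s$, $1\le q\le r$ (ruled out by $a\in B(r,s)$) or $a$ a positive integer in $\{1,\dots,s\}$. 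In the latter integer case, Theorem~\ref{cher} shows that both $\Lambda_{r,s,a}$ and $\Lambda_{r+1,s,a}$ are CM iff $a\le 2$ or $a>s$, so $B(r,s)\cap\ZZ_{>0}=B(r+1,s)\cap\ZZ_{>0}=\{3,\dots,s\}$, and the claim holds automatically. Outside this integer subcase, $\pi^{-1}(p_0)$ is a single orbit.

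In the remaining (generic) case, I mimic the proof of Proposition~\ref{formneighy}. Set $u=y_1-1$ and $Z_i=P_{i,\lambda}-a$, so that $P_{i,\lambda'}(y_2,\dots,y_{r+1},z_1,\dots,z_s)=Z_i-a((1+u)^i-1)$. Applying Proposition~\ref{fingen1} to $\lambda'$ gives an integer $N$ and a quasi-homogeneous polynomial $F$ of weighted degree $N$ (hence with no linear-in-$P_j$ monomial, since $\deg P_j=j<N$) such that $P_{N,\lambda'}=F(P_{1,\lambda'},\dots,P_{N-1,\lambda'})$. Substituting yields the equation
\[
Z_N-a((1+u)^N-1)=F\bigl(Z_1-au,\,Z_2-a((1+u)^2-1),\,\dots\bigr),
\]
whose derivative in $u$ at the origin is $-aN\ne 0$. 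The formal implicit function theorem then expresses $u$ as a power series in $Z_1,\dots,Z_N$, so $u\in\widehat{A}_{\lambda,p_0}$; the formulas $P_{i,\lambda'}=Z_i-a((1+u)^i-1)$ then embed $\widehat{A}_{\lambda',\pi'(\mathbf 0)}$ into $\widehat{A}_{\lambda,p_0}$, and combined with the single-preimage-orbit property this yields the claimed factorization.

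The main obstacle is tracking the nontrivial stabilizer of $\mathbf x_0$ in $S_\lambda=S_{r+1}\times S_s$, which is exactly $S_r\times S_s=S_{\lambda'}$ --- the very group whose invariants describe the normalization of $Y_{\lambda'}$. One must check that the formal factorization takes place inside the $S_r\times S_s$-invariant completion at $\mathbf x_0$: namely, that $u=y_1-1$ (automatically fixed by the stabilizer, since $y_1$ is) together with the $S_r\times S_s$-invariant completion of $\CC[[y_2,\dots,y_{r+1},z_1,\dots,z_s]]$ (which recovers $\widehat{A}_{\lambda',\pi'(\mathbf 0)}$) generate precisely $\widehat{A}_{\lambda,p_0}$. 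This is the adaptation of Proposition~\ref{formneighy}'s argument needed in the presence of repeated values, and requires the most care.
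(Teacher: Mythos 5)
Your argument is correct and is essentially the paper's own proof: reduce to the first statement via $B(r,s)=B(s,r)^{-1}$, apply the formal-neighborhoods factorization of Proposition \ref{formneighy} at the point with one $a$-coordinate equal to $1$, and observe that the only failure of the unique-fiber condition not already excluded by the domain of $B(r,s)$ is $a=k\in\{1,\dots,s\}$, which is settled by Theorem \ref{cher}. The only difference is that you explicitly carry out the adaptation of Proposition \ref{formneighy} to a repeated part (the fiber computation modulo $S_\lambda$ and the stabilizer bookkeeping), which the paper leaves implicit by citing that proposition directly.
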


\begin{proof} Since $B(r,s)=B(s,r)^{-1}$, it suffices to prove the first statement. 
This follows from Proposition \ref{formneighy} unless $a=k$ for integer $1\le k\le s$. But in this case, we know from Theorem \ref{cher} that 
$a\in B(r+1,s)$ unless $k=1$ or $k=2$, while $a=1,2$ do not belong to $B(r,s)$. This implies the statement.  
\end{proof} 

\begin{proposition}\label{adapt3.11}
Let $a=p/q$, where $p,q$ are coprime positive integers, $p>q$, $3\le p\le s$, $q\le r$, 
Then $a\in B(r,s)$ and $a^{-1}\in B(s,r)$. 
\end{proposition}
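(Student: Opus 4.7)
The identity $\Lambda_{r,s,a}=\Lambda_{s,r,a^{-1}}$ gives $B(r,s)=B(s,r)^{-1}$, so $a^{-1}\in B(s,r)$ follows once $a=p/q\in B(r,s)$ is shown, and I focus on the latter. Because $a=p/q>0$ avoids every excluded value of the form $-p'/q'$, Proposition~\ref{incl} propagates non-CM-ness one coordinate at a time in $r$ and then in $s$, so it suffices to handle the ``minimal'' case $p/q\in B(q,p)$. By Corollary~\ref{multiple} this coincides with the coordinate ring of $Y_{(p^q,q^p)}$ (scale $\lambda=((p/q)^q,1^p)$ by $q$), so the remaining task is to show that $Y_{(p^q,q^p)}$ is not Cohen--Macaulay.

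For this I would adapt the formal-neighborhood strategy from the proof of \cite[Proposition~3.11]{EGL}. Consider $\mathbf{x}=(1^q,0^p)\in\mathbb{C}^{q+p}$; expanding $\sum_{i\ge 0}P_{i,\lambda}(\mathbf{y})t^i$ as a partial fraction shows that every solution of $\pi(\mathbf{y})=\pi(\mathbf{x})$ has $\mathbf{y}$-coordinates taking only the values $0$ and $1$, with the total $\lambda$-weight of the value-$1$ coordinates equal to $pq$. Coprimality of $p,q$ together with $p>q$ forces $(a,b)=(q,0)$ or $(a,b)=(0,p)$ in $ap+bq=pq$, so $\pi^{-1}(\pi(\mathbf{x}))$ consists of exactly two $S_q\times S_p$-orbits, represented by $P_1=[(1^q,0^p)]$ and $P_2=[(0^q,1^p)]$. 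Proposition~\ref{formneighy} is thus not directly applicable, but the completed local ring of $Y_{(p^q,q^p)}$ at $\pi(\mathbf{x})$ embeds into the product of the two completed local rings of $\mathbb{C}^{q+p}/(S_q\times S_p)$ at $P_1$ and $P_2$, both of which are formally smooth of dimension $q+p$. The ``two-sheet'' gluing at $\pi(\mathbf{x})$ is what I expect to obstruct Cohen--Macaulayness.

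To conclude, the plan is to write each generator $P_{i,\lambda}-pq$ explicitly as a pair of power series in the local coordinates $u_k=y_k-1,\ v_j=z_j$ at $P_1$ and $y_k'=y_k,\ w_j=z_j-1$ at $P_2$, pass to the associated graded ring, and identify (up to a suitable degeneration) the result with the Stanley--Reisner ring of a simplicial complex $\Delta$ encoding the two branches; Reisner's theorem then yields non-CM-ness from a face of $\Delta$ whose link has non-vanishing reduced homology. \textbf{The main obstacle} is precisely this Reisner-type step: the Cherednik-algebraic input of \cite{EGL} is tuned to integer parameter $m$ in $\lambda=(m^r,1^s)$, and transferring it to the rational parameter $p/q$ requires either an analogous representation-theoretic construction at parameter $p/q$, or a direct combinatorial verification of the link condition. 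For small coprime pairs $(p,q)$ this is a concrete Macaulay2 check, but extracting a uniform argument is the technical heart of the proposition.
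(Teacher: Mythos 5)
Your reduction is exactly the paper's: use $B(r,s)=B(s,r)^{-1}$ and Proposition~\ref{incl} to shrink to $r=q$, $s=p$, identify $\Lambda_{q,p,p/q}$ with $A_{(p^q,q^p)}$, and study the formal neighborhood of the point $\pi(1^q,0^p)$, whose fiber under the normalization map consists of exactly two points by the coprimality argument $ap+bq=pq\Rightarrow (a,b)\in\{(q,0),(0,p)\}$. Up to that point you match the paper. But you then stop at the decisive step, declaring the ``Reisner-type step'' an open obstacle that might require Cherednik-algebra machinery at rational parameter or a case-by-case Macaulay2 check. That is a genuine gap, and moreover you have substantially overestimated the difficulty: the paper finishes in two lines of elementary geometry, using no representation theory at all.

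The missing observation is the computation of the \emph{intersection} of the two branches, and it follows from the same coprimality you already invoked. A point lying on both branches has its first $pq$ coordinates (in the $\CC^{2pq}$ picture of $X_\lambda/S_n$, $\lambda=(p^q,q^p)$) partitioned simultaneously into $q$ blocks of size $p$ and into $p$ blocks of size $q$ of equal coordinates; hence every level set has cardinality divisible by both $p$ and $q$, i.e.\ by $pq$, so all $pq$ coordinates are equal, and likewise for the last $pq$. Thus the two branches are (formally) smooth linear pieces of dimension $p+q$ meeting exactly in the $2$-dimensional subspace where the first half and second half of the coordinates are each constant. The formal neighborhood is therefore a $2$-dimensional formal disk times two transversal $(p+q-2)$-planes meeting at a point, which is the standard non-CM example (by Reisner's theorem applied to two disjoint simplices, or by Hartshorne connectedness, or by a direct depth computation) as soon as $p+q-2\ge 2$, which holds since $p\ge 3$ and $q\ge 1$. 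No interpolation of \cite{EGL} to rational parameters and no finite verification is needed; the combinatorics you were worried about collapses to the divisibility argument above.
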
 

\begin{proof} By Proposition \ref{incl}, it suffices to prove the statement when $r=q$ and $s=p$, so that $a=s/r$, $r<s$, $s\ge 3$. 
In this case, $\Lambda_{r,s,a}=A_\lambda$, where $\lambda=(s^r,r^s)$, a partition of $n=2rs$. 
Consider the formal neighborhood of the point $(0,\dots,0,1,\dots,1)$ of $X_\lambda/S_n$ where the number of zeros and the number of ones are both equal to $rs$. 
Let us generically deform this point inside $X_\lambda/S_n$. This deformation can happen in two ways: 
either the zeros deform into $r$-tuples of $s$ equal coordinates and ones deform into $s$-tuples of $r$ equal coordinates, or the other way. 
This gives two subspaces of dimension $r+s$ in $\Bbb C^{2rs}$ whose intersection is 2-dimensional (the space of vectors whose first $rs$ and last $rs$ coordinates are equal). 
Thus the formal neighborhood in question looks like a 2-dimensional formal disk times 
the formal neighborhood of the intersection point in the transversal intersection 
of two spaces of dimension $r+s-2$. This is not CM by Reisner's theorem, \cite{Re} (or by direct verification) 
if $r+s-2\ge 2$, i.e., for $s\ge 3$. This implies the statement. 
\end{proof} 

This implies that the union of the sets $B(r,s)$ over all $r,s$ contains all positive rational numbers except $1,2,1/2$. 

Let $\overline{B}(r,s)\subset B(r,s)$ be the set of points provided by Proposition \ref{adapt3.11}.
For example:
$$
\overline{B}(2,1)=\overline{B}(1,2)=\emptyset, \quad
\overline{B}(3,1)=\lbrace{1/3\rbrace}, \quad
\overline{B}(2,2)=\emptyset, \quad
\overline{B}(1,3)=\lbrace{3\rbrace}, 
$$
$$
\overline{B}(4,1)=\lbrace{1/3,1/4\rbrace}, \quad
\overline{B}(3,2)=\lbrace{1/3,2/3\rbrace}, \quad
\overline{B}(3,2)=\lbrace{3,3/2\rbrace}, \quad
\overline{B}(1,4)=\lbrace{3,4\rbrace},
$$
etc. 

\begin{conjecture}\label{badset} One has $B(r,s)=\overline{B}(r,s)$. 
\end{conjecture}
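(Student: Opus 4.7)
The containment $\overline{B}(r,s) \subseteq B(r,s)$ is Proposition~\ref{adapt3.11}; my plan addresses the reverse direction. Fix $a$ with $a \notin \overline{B}(r,s)$ and $a \neq -p/q$ for any forbidden pair. I would prove that $\Lambda_{r,s,a}$ is CM by strong induction on $r+s$, combined with a local analysis of $Y_\lambda$ at each closed point, where $\lambda = (a^r, 1^s)$.

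The first step is to extend the formal-neighborhood decomposition of Proposition~\ref{formneighy} from a single ``separated'' value to an arbitrary multiset. A closed point $x \in Y_\lambda$ is specified by distinct values $v_1, \ldots, v_\ell$ with multiplicities $(r_j, s_j)$ among the $y$- and $z$-coordinates. When the ``weights'' $w_j := a r_j + s_j$ are all distinct, the normalization $\pi : \Bbb C^{r+s}/(S_r \times S_s) \to Y_\lambda$ is bijective above $x$, and an argument in the spirit of Proposition~\ref{formneighy} should yield
\[
\widehat{\mathcal O}_{Y_\lambda, x} \;\cong\; \widehat{\bigotimes}_{j=1}^\ell \, \widehat{\mathcal O}_{Y_{\lambda^{(j)}}, 0}, \qquad \lambda^{(j)} = (a^{r_j}, 1^{s_j}).
\]
Each factor is a smaller variety with $r_j + s_j < r + s$, so the inductive hypothesis gives CM of each factor provided $a \notin \overline{B}(r_j, s_j)$. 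The monotonicity $\overline{B}(r_j, s_j) \subseteq \overline{B}(r, s)$, immediate from the inequalities $p \le s_j \le s$ and $q \le r_j \le r$ defining $\overline{B}$, guarantees this condition. Hence $\widehat{\mathcal O}_{Y_\lambda, x}$ is CM in this case.

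The interesting case is when two values $v_j \ne v_{j'}$ satisfy $w_j = w_{j'}$. This forces $a = p/q$ in lowest terms with $(r_j - r_{j'}, s_{j'} - s_j)$ a nonnegative multiple of $(q, p)$, so the ``swap'' $(r_j, s_j) \leftrightarrow (r_j - q, s_j + p)$ (compensated at $v_{j'}$) produces an extra preimage of $x$ under $\pi$. The completed local ring $\widehat{\mathcal O}_{Y_\lambda, x}$ then becomes a formal transverse union of two smooth branches. As in the proof of Proposition~\ref{adapt3.11}, this decomposes as a smooth formal factor times the transverse intersection of two linear subspaces, and Reisner's criterion determines when the union is CM: the codimension of the branch-intersection in each branch must be at most one. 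Converting this codimension condition to arithmetic constraints on $(r, s, p, q)$ should recover exactly the bounds $3 \le p \le s$, $q \le r$ (or the symmetric $3 \le q \le r$, $p \le s$) that define $\overline{B}(r, s)$. Since by assumption $a \notin \overline{B}(r, s)$, this case does not arise and CM holds.

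The main obstacle I anticipate is the case of higher-order coincidences: three or more distinct values sharing a common weight, or several independent pairwise coincidences stacking at the same point. Each extra coincidence multiplies the number of smooth branches contributing to the completed local ring, and Reisner's criterion for multi-branch unions is substantially more delicate than for the two-branch case. A promising but nontrivial reduction is to argue that any non-CM failure at a multi-coincidence point persists under a generic flat degeneration to a pairwise-coincidence point, via upper semicontinuity of depth along flat families; such a reduction, combined with careful bookkeeping at the base of the induction (in particular when some $r_j$ or $s_j$ vanishes, so that $\lambda^{(j)}$ becomes purely $y$-type or purely $z$-type and the corresponding $Y_{\lambda^{(j)}}$ is automatically a polynomial ring), would complete the argument. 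Verifying this step fully rigorously, and perhaps settling small boundary cases by direct computation in Macaulay2 in the spirit of the rest of the paper, is the most technically demanding portion of the program.
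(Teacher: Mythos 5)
The statement you are addressing is stated in the paper as a \emph{conjecture}, not a theorem: the paper offers no proof of $B(r,s)=\overline{B}(r,s)$, only computational verification for $r+s\le 5$ and a theoretical proof for $r+s=3$ (Proposition~\ref{raa}, which shows $B(r,s)=\emptyset$ there). Your proposal is a program rather than a proof, and you say as much; but beyond the gap you acknowledge, there is a concrete error in the step you treat as settled. You claim that when $a\notin\overline{B}(r,s)$ the ``interesting case'' of a weight coincidence $w_j=w_{j'}$ does not arise. That is false: a coincidence $ar_j+s_j=ar_{j'}+s_{j'}$ with $v_j\ne v_{j'}$ occurs whenever $a=p/q$ in lowest terms with $p\le s$ and $q\le r$ (or the reciprocal condition), with no constraint $p\ge 3$. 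In particular coincidences occur for $a=1,2,1/2$, precisely the values singled out in Remark~\ref{exce} as having degenerate Hilbert series even though the rings are CM by Theorem~\ref{cher}. So the two-branch local analysis must actually be carried out and shown to yield CM rings when $p\le 2$, not dismissed. Moreover, at such a point the two branches are in general \emph{not} smooth: each is a completed tensor product of local rings of smaller $Y_{\lambda^{(j)}}$'s, which are singular. Reisner's criterion for unions of coordinate subspaces does not apply directly; you would need a Mayer--Vietoris/depth argument, which additionally requires knowing that the intersection of the branches is CM of the correct dimension --- a condition you never address.

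The multi-coincidence case, which you correctly identify as the main obstacle, is where the proposal stops being an argument. The appeal to ``upper semicontinuity of depth along flat families'' is unsubstantiated: the completed local rings at different points of the same variety $Y_\lambda$ do not naturally form a flat family, and the openness of the CM locus in a flat family runs from the special fiber to nearby fibers, so it is not clear that a degeneration argument in either direction reduces a multi-branch union to the two-branch case. Also note that even the first, ``separated'' case rests on an unproved generalization of Proposition~\ref{formneighy} to a product decomposition over all level sets simultaneously; this is plausible and in the spirit of the paper, but it is not proved there and would need a careful argument (the paper only establishes the case of splitting off a single coordinate whose value is not a subsum of the others). In short: the containment $\overline{B}(r,s)\subseteq B(r,s)$ is indeed Proposition~\ref{adapt3.11}, but the reverse containment remains open, and your outline both misidentifies when the hard local configurations occur and leaves the genuinely hard configurations unhandled.
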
 

In particular, this would imply that $B(r,s)\subset \Bbb Q$ (from abstract nonsense, it is only clear that $B(r,s)\subset \overline{\Bbb Q}$). 

Computations in Mathematica have confirmed this conjecture for $r+s\le 5$; moreover, for $r+s=3$ we will prove theoretically below that $B(r,s)=\emptyset$.  
 
\begin{remark}\label{exce} 
While $1,2,\frac{1}{2}$ do not belong to the sets $B(r,s)$ (as the corresponding varieties are CM), computations show that at these points 
(for $r\ge 2$ for $1/2$ and $s\ge 2$ for 2) the coefficients of the Hilbert series of $\Lambda_{r,s,a}$ are smaller than those for generic $a$, so they also should be viewed as exceptional. 

On the other hand, for the values $a=-p/q$, $1\le p\le s$, $1\le q\le r$ (which we excluded), \cite[Theorem 2]{SV} implies that $\Lambda_{r,s,a}$ has the same Hilbert series as generically, 
but it is not finitely generated and not Noetherian, so not CM. 

Thus, the full exceptional set is of the form $\widetilde{B}(r,s)=\lbrace{b\in \Bbb C^*|b\ne \pm \frac{p}{q}, 1\le p\le s, 1\le q\le r\rbrace}$. 
\end{remark}

\begin{remark}
We expect that a proof of Conjecture \ref{badset} or even the rationality statement for the exceptional values of $a$ should involve an interpolation 
of the representation theory of rational Cherednik algebras to complex rank. More precisely, recall from \cite{EGL} that if $a\ge 1$ is an integer then 
the algebra $\Lambda_{r,s,a}={\mathcal O}(X_{(a^r,1^s)}/S_{ra+s})$, carries an action of the spherical rational Cherednik algebra ${\bf e}H_{1/a}(ra+s){\bf e}$
for the symmetric group $S_{ra+s}$ with parameter $c=1/a$. More precisely, it carries a faithful action of the simple algebra 
\[
B_{r,s}(a):={\bf e}H_{1/a}(ra+s){\bf e}/I_{\rm max},
\]
where $I_{\rm max}$ is the maximal ideal in ${\bf e}H_{1/a}(ra+s){\bf e}$
(which is known to be unique). The algebra $B_{r,s}(a)$ is generated by 
$P_i={\bf e}\sum x_j^i$ and $P_i^*={\bf e}\sum D_j^i$, where $D_j$ are the Dunkl 
operators; when written in coordinates $y_k$, $P_i$ become the polynomials $P_{i,\lambda}$ 
and $P_i^*$ become the quantum integrals of the deformed Calogero--Moser system of type $A_r(s)$ with coupling constant $a$, see \cite{SV}. 

In other words, the algebra $B_{r,s}(a)$ is generated by $\Lambda_{r,s,a}$ and the algebra $\Lambda_{r,s,a}^*$ 
of quantum integrals of the deformed Calogero--Moser system (which is a commutative algebra of differential operators 
isomorphic to $\Lambda_{r,s,a}$). This definition actually makes sense for any complex $a\ne 0$, not only for positive integers, and 
gives rise to a flat family of algebras $B_{r,s}(a)$ parametrized by a complex parameter $a$. Exceptional values of $a$ are likely those for which 
this algebra ceases to be simple, and its representations degenerate (at other values of $a$, we expect that the methods 
similar to ones of \cite{EGL} should apply to establish the CM property). 

The algebras $B_{r,s}(a)$ for general $a$ are quotients of interpolations of 
\[
{\bf e}H_{1/a}(ra+s){\bf e}={\bf e}H_{\frac{r}{n-s}}(n){\bf e}
\]
to complex values of the rank $n=ra+s$ (which is a kind of toroidal Yangian, similar to the deformed double current algebras from \cite{Gu}). 
Such interpolations for the full (non-spherical) Cherednik algebras were considered in \cite{EA, E} (based on the Deligne category $\Rep(S_\nu)$). These interpolations have two parameters $c,\nu$, where $\nu$ is an interpolation of $n$, 
and the situation at hand corresponds to the hyperbola $c=\frac{r}{\nu-s}$ in the $(c,\nu)$-plane, which is a reducibility locus for 
the interpolated polynomial module, see \cite{EA}. Further degenerations occur at intersection points of this hyperbola with other 
reducibility hyperbolas, which occurs at rational values of $\nu$. This should lead to a proof of rationality of exceptional values. 
The determination of the exact set of exceptional values by this method would likely require a more detailed study of representations of the rational Cherednik algebra of 
complex rank along the lines of \cite{EA}.     

We note that an approach to deformed Calogero--Moser systems of type $A_r(s)$ essentially based on interpolation to complex rank is also proposed in \cite{SV2}. 
\end{remark} 

\section{\texorpdfstring{The case $\lambda=(a,b,c)$.}{The case lambda=(a,b,c)}}

Let us now focus on the first nontrivial case $\lambda=(a,b,c)$. 
By Theorem \ref{cher}, $X_\lambda$ is CM 
if $(a,b,c)=(a,1,1)$, $(a,a,1)$, or $(a,a,a)$ for positive integer $a$.
 
Also, we have the following result. 

\begin{proposition}\label{422} 
$X_\lambda$ is CM if $\lambda=(4,2,2)$ but is not CM if $\lambda \in \{(3,2,2), (3,3,2), (5,2,2)\}$. Thus, $X_\lambda$ is not CM for $\lambda=(3^r,2^s)$ for $r\ge 1$, $s\ge 1$, $r+s\ge 3$, and $\lambda=(5^r,2^s)$, $r\ge 1$, $s\ge 2$. 
\end{proposition}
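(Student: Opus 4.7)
The plan is to establish the four base cases $(4,2,2)$, $(3,2,2)$, $(3,3,2)$, $(5,2,2)$ directly, and then derive the two ``Thus'' families by iterated application of Proposition~\ref{formneigh}. Taking the families first: to pass from $(3,2,2)$ or $(3,3,2)$ up to an arbitrary $(3^r,2^s)$ with $r,s\ge 1$ and $r+s\ge 3$, I would induct on $r+s$, at each step adding a single part of size $2$ or $3$. The hypothesis of Proposition~\ref{formneigh} is automatic: in any partition with parts drawn from $\{2,3\}$ the smallest nontrivial sum of parts is $2+2=4$, which differs from both $2$ and $3$. The same template, starting from $(5,2,2)$ and adding parts of size $2$ or $5$, gives the $(5^r,2^s)$ family, since the nontrivial sums of parts from $\{2,5\}$ are all $\ge 4$, and in particular never equal $2$ or $5$.

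For each of the four base partitions $\lambda$, $X_\lambda\subset\CC^n$ is a reduced union of linear $3$-planes with defining ideal $I_\lambda=\bigcap_\sigma \sigma\cdot I(E_\lambda)\subset R_n:=\CC[x_1,\ldots,x_n]$, where $\sigma$ ranges over coset representatives of $\mathrm{Stab}_{S_n}(E_\lambda)\backslash S_n$; the orbit sizes are $210$, $105$, $280$, $378$ in the four cases respectively. Since $X_\lambda$ has pure dimension $r=3$, the Auslander--Buchsbaum formula reduces the CM question to the equality $\mathrm{pd}_{R_n}(R_n/I_\lambda)=n-3$. My plan is to form $I_\lambda$ in Macaulay2, compute a minimal free resolution, and read off the projective dimension: I expect it to equal $5$ for $(4,2,2)$ (so $X_{(4,2,2)}$ is CM) and to strictly exceed $n-3$ for each of the other three partitions (so those fail CM). A partial consistency check for the positive case: since $(4,2,2)=2\cdot(2,1,1)$, Corollary~\ref{multiple} together with Theorem~\ref{cher} already yields that $X_{(4,2,2)}/S_8\cong X_{(2,1,1)}/S_4$ is CM, matching the expected conclusion at the level of invariants.

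The main obstacle is computational: intersecting several hundred linear ideals in seven to nine variables and then resolving can easily overwhelm Macaulay2 if done naively. To mitigate this I would exploit the $S_n$-equivariance and generate $I_\lambda$ not as the intersection of the full orbit of linear primes, but as the radical of an ideal generated by the $S_n$-orbit of a few ``Vandermonde-type'' polynomials --- products of linear forms $(x_i-x_j)$ indexed by set partitions incompatible with the block structure of $\lambda$ --- and then resolve this smaller presentation. Unlike the ``equal parts'' and $(m^r,1^s)$ families treated in~\cite{EGL}, there is no evident representation-theoretic principle organizing these four sporadic partitions, so a direct computer verification appears unavoidable.
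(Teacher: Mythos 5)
Your proposal matches the paper's proof in structure: the four base cases are settled by a Macaulay2 computation and the two infinite families then follow by iterated application of Proposition~\ref{formneigh}, with exactly the hypothesis check you give (a part $\ell\in\{2,3\}$, resp.\ $\ell\in\{2,5\}$, is never a nontrivial sum of parts already present). The only divergence is the computational certificate --- the paper tests a random linear system of parameters for $(4,2,2)$ and, for the non-CM cases, exhibits a negative coefficient in the numerator of the Hilbert series over $(1-t)^3$, whereas you would compute $\mathrm{pd}_{R_n}(R_n/I_\lambda)$ from a minimal free resolution and invoke Auslander--Buchsbaum; both are valid, though the Hilbert-series route is generally cheaper for orbits of this size.
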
 

\begin{proof} 
The second statement follows from the first one and Proposition \ref{formneigh}. The first statement is proved by a Macaulay2 calculation 
(we do not have a theoretical proof of these facts) which is included in the file {\tt computations1.m2} and which we comment on now.

To show that $X_{(4,2,2)}$ is CM, it suffices to find a regular sequence for its coordinate ring. In the file, we produce a collection of $3$ random linear forms and test if they form a regular sequence by calculating the Hilbert series of the coordinate ring before and after quotienting by these forms. It suffices that this works for one example, and the random collection will have the desired property with high probability.

For the examples that are not CM, we calculate the ideal and observe that the numerator of the Hilbert series has negative coefficients:
\begin{align*}
h_{X_{3,2,2}}(t) &= \frac{1 + 4t + 10t^2 + 20t^3 + 35t^4 + 35t^5 + 14t^6 - 14t^7}{(1-t)^3},\\
h_{X_{3,3,2}}(t) &= \frac{1 + 5t + 15t^2 + 35t^3 + 70t^4 + 98t^5 + 70t^6 - 14t^7}{(1-t)^3},\\
h_{X_{5,2,2}}(t) &= \frac{1 + 6t + 21t^2 + 56t^3 + 126t^4 + 216t^5 - 48t^6}{(1-t)^3}.
\end{align*}

One additional comment: in the file, the calculations are set up for a finite field of size $32003$ to significantly speed up calculations. One cannot use this to deduce that the variety is not CM over a field of characteristic $0$, but the calculations can also be performed in this situation (with much more patience) and the Hilbert series stays the same.
\end{proof} 

To address the case $\lambda=(a,b,c)$, we look at $Y_\lambda^0$ 
and try to identify cases when it is not CM. 
This problem is convenient to approach by computer calculation, since 
$Y_\lambda^0$ is a surface, and $a$, $b$, and $c$ 
occur as parameters in its presentation, which can actually be taken to be nonzero complex numbers.
 
By rescaling $(a,b,c)$ as in Corollary \ref{multiple}, we may assume that $c=1$. 
Then the ring $R_{a,b}:=A_\lambda^0$ of functions on $Y_\lambda^0$ 
is generated inside $\Bbb C[x,y]$ by the homogeneous polynomials 
$$
P_i=P_{i,a|b}:=ax^i+by^i+(-ax-by)^i, \qquad i\ge 2. 
$$
(note that $P_1=0$). 
As shown in Proposition \ref{fingen1}, 
$R_{a,b}$ is a finitely generated module 
over $\Bbb C[P_2,P_3]$ outside of the lines 
\[
a+1=0,\quad b+1=0,\quad a+b=0,\quad a+b+1=0. 
\]

Let us now study the CM property of $R_{a,b}$ outside of these lines.
It is clear that in this case, $R_{a,b}$ is CM if and only if $R_{a,b}$ 
is a free $\Bbb C[P_2,P_3]$-module. 

\subsection{The special case $(a,a,1)$} 

Let us start with the case $b=a$; in this case 
$R_{a,b}=R_{a,a}=\Lambda_{2,1,a}$, generated by deformed Newton sums
$$
P_i=a(x^i+y^i)+(-a)^i(x+y)^i.
$$
Thus, we know from Theorem \ref{SVconj} that $R_{a,a}$ is CM for almost all $a$, but we 
want to prove a stronger statement: 

\begin{proposition}\label{raa} The algebra $R_{a,a}$ is CM for all $a\ne -1,-1/2$ (i.e., in all cases 
when $R_{a,a}$ is finitely generated). Moreover, for such $a$ it has the Hilbert series 
\[
\frac{1+t^4+t^5}{(1-t^2)(1-t^3)}.
\]
Therefore, the same statements hold 
for $R_{a,1}$ for $a\ne -1,-2$. Thus, $B(r,s)=\emptyset$ if $r+s=3$ (i.e., Conjecture \ref{badset} holds in this case). 
\end{proposition}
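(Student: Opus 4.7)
The plan is to prove Cohen--Macaulayness of $R_{a,a}$ by exhibiting it as a free $\CC[P_2, P_3]$-module of rank $3$ with generators in degrees $0,4,5$. Set up: since $R_{a,a}$ is $S_2$-invariant under swapping the two coordinates of weight $a$, it embeds into $\CC[p,q]$ with $p=x+y$, $q=xy$, and in these coordinates
\[
P_2 = a(1+a)p^2 - 2aq, \qquad P_3 = a(1-a^2)p^3 - 3apq.
\]
Solving $P_2$ for $q$ and substituting into $P_3$ yields $P_3\equiv -\tfrac12\, a(1+a)(1+2a)\,p^3\pmod{P_2}$, so $\CC[p,q]/(P_2,P_3)\cong\CC[p]/(p^3)$ whenever $a\notin\{0,-1,-\tfrac12\}$. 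Hence $P_2,P_3$ form a regular sequence in $\CC[p,q]$, and $\CC[p,q]$ is a free $\CC[P_2,P_3]$-module of rank $3$ with basis $\{1,p,p^2\}$.

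Next, take $\{1,P_4,P_5\}$ as the candidate free basis for $R_{a,a}$. Expanding $P_4$ and $P_5$ in the $\{1,p,p^2\}$-basis gives an explicit $3\times 3$ matrix with entries in $\CC[P_2,P_3]$; computing its determinant and tracking the scalar $a$-dependence verifies that it is a nonzero element of $\CC[P_2,P_3]$ for $a\ne -1,-\tfrac12$, establishing $\CC[P_2,P_3]$-linear independence and a lower bound on the Hilbert series. For the matching upper bound I will show that the $\CC[P_2,P_3]$-submodule $M\subset\CC[p,q]$ spanned by $\{1,P_4,P_5\}$ equals $R_{a,a}$: since $M$ is visibly closed under multiplication by $P_2$ and $P_3$, this reduces to checking the three multiplication identities $P_4^2,P_4P_5,P_5^2\in M$ and that each $P_i$ for $i\ge 6$ lies in $M$, both of which become polynomial identities in $p,q$ that are transparent once expanded in the free-module basis. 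Combining the bounds shows $R_{a,a}$ is free of rank $3$ over $\CC[P_2,P_3]$ with the claimed Hilbert series, hence CM. At the degenerate value $a=1$ one directly computes $P_4=P_2^2/2$ and $P_5=\tfrac56 P_2P_3$, so $R_{1,1}=\CC[P_2,P_3]$ is a polynomial ring and trivially CM.

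The statement for $R_{a,1}$ follows from Corollary~\ref{multiple}: permuting the parts of $\lambda=(a,1,1)$ and rescaling by $1/a$ identifies $R_{a,1}\cong R_{1/a,1/a}$, which is CM iff $1/a\ne -1,-\tfrac12$, i.e.\ $a\ne -1,-2$. Since $R_{a,a}=\Lambda_{2,1,a}/(Q_1)$ and $Q_1$ is a non-zero-divisor (both rings being subrings of a polynomial ring), CM of $R_{a,a}$ is equivalent to CM of $\Lambda_{2,1,a}$, so $B(2,1)=\emptyset$; by $B(r,s)=B(s,r)^{-1}$ this also gives $B(1,2)=\emptyset$, hence $B(r,s)=\emptyset$ for $r+s=3$.

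The main obstacle will be the explicit determinant computation in the independence step: one must carry out a finite polynomial calculation and verify that the $a$-dependent factor has roots confined to $\{-1,-\tfrac12,1\}$, with no unexpected zero-locus in $\CC[P_2,P_3]$. A convenient approach is to specialize $P_2$ and $P_3$ to generic numerical values, reducing the question to a small polynomial identity in $a$ whose roots are read off directly, and then using flatness of the $\CC[P_2,P_3]$-module structure away from this specialization to conclude. The remainder of the argument is routine linear algebra over $\CC[P_2,P_3]$.
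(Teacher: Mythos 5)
Your overall strategy is sound but diverges from the paper's at the crucial step, and as written it has two gaps. The paper does not verify module membership of the $P_i$ directly; instead it observes that every element of $R_{a,a}$ is a quasi-invariant ($\partial F/\partial x=0$ on the line $x=-a(x+y)$), that the algebra $S$ of all quasi-invariants has codimension exactly $1$ in each positive degree of $\CC[u,w]$ (hence Hilbert series $\frac{1+t^4+t^5}{(1-t^2)(1-t^3)}$), and then proves $S_d=P_2S_{d-2}+P_3S_{d-3}$ for $d\ge 6$ by an elementary leading-coefficient argument. This gives the upper bound $R_{a,a}\subseteq S$ for free and reduces generation to degrees $\le 5$; no products $P_iP_j$ and no $P_i$ with $i\ge 6$ ever need to be examined. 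Your route replaces this with an explicit generation check, and that is where the first gap appears: ``each $P_i$ for $i\ge 6$ lies in $M$'' is an infinite family of identities, and nothing in your write-up reduces it to a finite computation. It can be repaired exactly as in the paper's Proposition~\ref{bplus1b}: each of $x$, $y$, $-a(x+y)$ is integral of degree $6$ over $\CC[P_2,P_3]$, so the $P_i$ satisfy a monic linear recursion of length $18$ over $\CC[P_2,P_3]$ and only finitely many memberships need checking. Without some such device (or the quasi-invariance bound) your argument is incomplete.

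The second gap is in your exceptional set. You predict that the determinant degenerates only at $a\in\{-1,-\tfrac12,1\}$ and treat $a=1$ separately, but $a=\tfrac12$ is also a degeneration: writing $P_5$ in the basis $u^5,u^3w,uw^2$, its $u^5$-coefficient is $-\tfrac{a}{4}(2a-1)(2a+1)(a-1)(a+1)$ and the remaining proportionality with $P_2P_3$ forces $(2a-1)(a-1)=0$, so $P_5\in\CC\,P_2P_3$ exactly when $a\in\{\tfrac12,1\}$. At $a=\tfrac12$ your candidate basis $\{1,P_4,P_5\}$ therefore has vanishing determinant and the lower bound fails. The case is salvageable---$R_{1/2,1/2}\cong A^0_{(1,1,2)}=\CC[e_2,e_3,e_4]/(\delta^2)$ is a hypersurface, hence CM, with Hilbert series $\frac{1+t^4+t^8}{(1-t^2)(1-t^3)}$---but it must be handled separately, just like $a=1$; this is consistent with the paper's own Remark~\ref{exce}. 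To your credit, you correctly caught something the paper glosses over: the ``easy check'' in Corollary~\ref{genraa} silently fails at $a=1$ (and at $a=\tfrac12$), so the uniform Hilbert series asserted in Proposition~\ref{raa} is in fact wrong at these two values, although Cohen--Macaulayness, and hence the conclusion $B(r,s)=\emptyset$ for $r+s=3$, survives.
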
 

The rest of the subsection is devoted to the proof of Proposition \ref{raa}. 

First of all, note that $R_{a,a}$ is a subring of the ring of symmetric polynomials in $x,y$, i.e., of $\Bbb C[u,v]$, 
where $u=x+y$, $v=x^2+y^2$. E.g., $P_2=av+a^2u^2$. Let us denote $P_2$ by $w$. So $v=a^{-1}w-au^2$, and $\Bbb C[u,v]=\Bbb C[u,w]$.

Now let us express $P_3$ as a linear combination of $u^3$ and $uw$.
We get 
\begin{equation}\label{p3}
P_3=\frac{3}{2}uw-\frac{1}{2}a(a+1)(2a+1)u^3. 
\end{equation}

Now we will need the following sequence of lemmas.

\begin{lemma}\label{l1} 
Every element $F \in R_{a,a}$ is a quasi-invariant, in the sense that $\frac{\partial F}{\partial x}=0$ when $x=-a(x+y)$.
\end{lemma}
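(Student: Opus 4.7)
The plan is to reduce the claim to a check on the algebra generators $P_i = ax^i + ay^i + (-a)^i(x+y)^i$ of $R_{a,a}$, and then to verify the quasi-invariance property on those generators by a direct differentiation.

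First, I observe that the set $Q \subset \CC[x,y]$ of polynomials $F$ with $\partial_x F = 0$ on the line $L: x = -a(x+y)$ is a $\CC$-subalgebra of $\CC[x,y]$: closure under addition and scalar multiplication is obvious, and closure under multiplication is the Leibniz identity $\partial_x(FG) = (\partial_x F)G + F(\partial_x G)$, which restricted to $L$ is a sum of two terms each containing a factor that vanishes on $L$. Since $R_{a,a}$ is generated as a $\CC$-algebra by the $P_i$ for $i \ge 2$, it suffices to prove $P_i \in Q$ for each such $i$.

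For the generator check, I would differentiate directly: $\partial_x P_i = i a x^{i-1} + i(-a)^i(x+y)^{i-1}$. Substituting $x = -a(x+y)$ gives $x^{i-1} = (-a)^{i-1}(x+y)^{i-1}$, so
\[
\partial_x P_i \bigl|_{x=-a(x+y)} = i a\,(-a)^{i-1}(x+y)^{i-1} + i(-a)^i(x+y)^{i-1} = i(-a)^{i-1}(x+y)^{i-1}\bigl(a + (-a)\bigr) = 0,
\]
which finishes the argument.

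There is no serious obstacle for this lemma itself; the calculation is elementary once the reduction to generators is made. The point of the lemma, rather than its proof, is likely what will be delicate later: the quasi-invariance condition singles out a proper subring of $\CC[u,v]$ containing $R_{a,a}$, and I expect the next steps to use it (together with the explicit expressions for $P_2 = w$ and $P_3$ in \eqref{p3}) to show that $R_{a,a}$ actually equals this subring, from which the Hilbert series computation and freeness over $\CC[P_2,P_3]$ in Proposition \ref{raa} will follow.
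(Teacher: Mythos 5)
Your proof is correct and follows exactly the paper's argument: reduce to the generators $P_i$ via the Leibniz rule and then verify quasi-invariance of each $P_i$ directly (the paper leaves this last computation as "easy," and your explicit verification of it is right).
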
 

\begin{proof} By the Leibniz rule, it is enough to show that $P_i$ are quasi-invariants, 
which is easy.
\end{proof} 
 
\begin{lemma}\label{l2} Let $S$ be the algebra of all quasi-invariants.
The codimension of $S_d$ in $\Bbb C[u,w]_d$ in any positive degree $d$ is $1$. 
In other words, the Hilbert series of $S$ is 
\begin{align*}
h_S(t) &= \frac{1}{(1-t)(1-t^2)}-\frac{t}{1-t}=\frac{1+t^4+t^5}{(1-t^2)(1-t^3)}\\
&= 1+t^2+t^3+2t^4+2t^5+3t^6+3t^7+\cdots
\end{align*}
\end{lemma}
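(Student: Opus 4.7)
The plan is to realize $S_d$ as the kernel of an explicit linear evaluation map with one-dimensional target, and then read off the Hilbert series by a single subtraction.

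First I would parametrize the quasi-invariance locus $\{x+a(x+y)=0\}\subset \CC^2$. Combining this equation with $u=x+y$ gives $(x,y)=(-au,\,(1+a)u)$, a line parametrized by the single variable $u$.

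Next, for each $d\ge 1$ I would introduce the evaluation map
$$\phi_d \colon \CC[u,w]_d \longrightarrow \CC[u]_{d-1}, \qquad \phi_d(F)\;=\;\left.\frac{\partial F}{\partial x}\right|_{(x,y)=(-au,\,(1+a)u)}.$$
Any $F\in \CC[u,w]_d$ has total degree $d$ when viewed in $\CC[x,y]$, so $\partial F/\partial x$ has total degree $d-1$, and restricting to the line produces a homogeneous polynomial in $u$ of degree $d-1$; thus $\phi_d$ really lands in $\CC[u]_{d-1}$. By the characterization of $S$ from Lemma~\ref{l1}, $S_d=\ker(\phi_d)$. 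The codomain is one-dimensional, so $S_d$ has codimension at most $1$ in $\CC[u,w]_d$. Equality follows from the single explicit check $\phi_d(u^d)=d\,u^{d-1}\ne 0$, which shows that $\phi_d$ is surjective in every positive degree.

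The Hilbert series is then immediate from $\dim S_d=\dim\CC[u,w]_d-1=\lfloor d/2\rfloor$ for $d\ge 1$:
$$h_S(t)\;=\;h_{\CC[u,w]}(t)-\sum_{d\ge 1}t^d\;=\;\frac{1}{(1-t)(1-t^2)}-\frac{t}{1-t}\;=\;\frac{1-t+t^3}{(1-t)(1-t^2)},$$
and multiplying numerator and denominator by $1+t+t^2$, using the identity $(1-t+t^3)(1+t+t^2)=1+t^4+t^5$, rewrites this as the claimed $\frac{1+t^4+t^5}{(1-t^2)(1-t^3)}$. I do not see a real obstacle here; the only spot that requires a moment of care is verifying that $\phi_d$ is well-defined with the advertised one-dimensional target and that its kernel is precisely $S_d$ rather than something larger, which is just a degree/dimension check.
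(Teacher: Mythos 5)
Your proof is correct and takes essentially the same route as the paper, which simply asserts that quasi-invariance imposes one nontrivial linear condition in each positive degree and subtracts $\frac{t}{1-t}$ from the Hilbert series of $\CC[u,v]$. Your version merely makes the assertion explicit by exhibiting the condition as a surjective linear map $\phi_d\colon \CC[u,w]_d\to\CC[u]_{d-1}$ and verifying nontriviality on $u^d$, which is a welcome but not essentially different elaboration.
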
 

\begin{proof} 
This follows from the fact that in every positive degree, 
quasi-invariance gives one nontrivial linear equation.
In more detail, the Hilbert series of $\Bbb C[u,v]$ is 
$\frac{1}{(1-t)(1-t^2)}$ (as we have free generators $u,v$ 
of degrees $1,2$), and to account for one linear relation
in all positive degrees, we need to subtract $\frac{t}{1-t}=t+t^2+t^3+ \cdots$.  
\end{proof} 

\begin{lemma}\label{l3} 
If for $d\ge 3$, $F\in S_d$ is divisible by $w$, then $F=wG$, where $G\in S$.
\end{lemma}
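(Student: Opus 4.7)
The plan is to apply the Leibniz rule and exploit the following happy coincidence: $\partial w/\partial x$ itself vanishes on the quasi-invariance locus $L:=\{x=-a(x+y)\}$, so the first term in $\partial F/\partial x=(\partial w/\partial x)G+w(\partial G/\partial x)$ drops out automatically when restricted to $L$. Because $\Bbb C[u,v]$ is an integral domain in which $w\ne 0$, I note first that the quotient $G:=F/w$ is uniquely determined inside $\Bbb C[u,v]$ (its $S_2$-invariance being inherited from $F$ and $w$), so the task reduces to showing $(\partial G/\partial x)|_L=0$.

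To carry this out, I would compute directly from $w=P_2=a(x^2+y^2)+a^2(x+y)^2$ that $\partial w/\partial x=2a(x+a(x+y))$, visibly a scalar multiple of the defining equation of $L$. Differentiating $F=wG$ in $x$ and restricting to $L$ then yields
\[
0=\frac{\partial F}{\partial x}\bigg|_L=\frac{\partial w}{\partial x}\bigg|_L\cdot G|_L+w|_L\cdot\frac{\partial G}{\partial x}\bigg|_L=w|_L\cdot\frac{\partial G}{\partial x}\bigg|_L,
\]
the first equality using $F\in S$ and the last using what was just computed. Parametrizing $L$ by $u$ via $x=-au$, $y=(1+a)u$, a direct expansion gives $w|_L=a(a+1)(2a+1)u^2$, which is a nonzero element of the integral domain $\Bbb C[u]$ under the standing assumption $a\notin\{0,-1,-\tfrac12\}$. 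Cancelling $w|_L$ then forces $(\partial G/\partial x)|_L=0$, i.e.\ $G\in S$, as desired.

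I do not anticipate any real obstacle: the whole argument is a one-line Leibniz computation, whose only nontrivial input is that quasi-invariance of $F$ and the vanishing of $\partial w/\partial x$ happen to be imposed on the same line $L$. The degree hypothesis $d\ge 3$ does not seem to enter the argument at all, and is presumably stated only because that is the range in which the lemma will be applied inductively in what follows.
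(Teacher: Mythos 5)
Your proof is correct, but it takes a genuinely different route from the paper's. The paper proves the lemma by a soft dimension count: by Lemma \ref{l2} one has $\dim S_{d-2}=\dim S_d-1$, so $wS_{d-2}$ has codimension $1$ in $S_d$; since divisibility by $w$ imposes a single nontrivial linear condition on $S_d$ (vanishing of the $u^d$-coefficient), the subspace of $S_d$ of elements divisible by $w$ also has codimension $1$ and hence coincides with $wS_{d-2}$. You instead verify quasi-invariance of $G=F/w$ directly via the Leibniz rule, using the observation that $\partial w/\partial x=2a\bigl(x+a(x+y)\bigr)$ vanishes identically on the line $L=\{x=-a(x+y)\}$, so that restricting $\partial F/\partial x=(\partial w/\partial x)G+w(\partial G/\partial x)$ to $L$ leaves $0=w|_L\cdot(\partial G/\partial x)|_L$ with $w|_L=a(a+1)(2a+1)u^2\ne 0$ under the standing assumption $a\ne 0,-1,-\tfrac12$; your computations check out. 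What your argument buys: it is independent of Lemma \ref{l2}, it works in every degree (you are right that $d\ge 3$ plays no role for you; in the paper's argument that hypothesis is what guarantees that not all of $S_d$ is divisible by $w$, the case $d=2$ holding trivially anyway since $S_2=\CC w$), and it makes transparent that the exceptional values enter through the same factor $a(a+1)(2a+1)$ that governs the $u^3$-coefficient of $P_3$ in \eqref{p3}. What the paper's argument buys is brevity --- two lines and no computation --- at the price of relying on the Hilbert series of $S$ from Lemma \ref{l2} and leaving the nontriviality of the divisibility condition on $S_d$ to the reader.
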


\begin{proof} 
By Lemma \ref{l2}, $\dim S_{d-2}=\dim S_d-1$, so $wS_{d-2}$ has codimension $1$ in $S_{d}$. 
But the condition of divisibility by $w$ is one nontrivial linear equation.
\end{proof} 

\begin{lemma}\label{l4} 
If $d\ge 6$ then $S_d = wS_{d-2}+P_3S_{d-3}$.
\end{lemma}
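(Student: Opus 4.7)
The plan is a dimension count that applies Lemma \ref{l3} at two different degrees together with the explicit formula (\ref{p3}) for $P_3$. First I would apply Lemma \ref{l3} to $S_d$ (legitimate since $d\ge 6\ge 3$) to conclude that $wS_{d-2}$ has codimension exactly $1$ in $S_d$. Consequently, to prove $S_d=wS_{d-2}+P_3S_{d-3}$ it is enough to exhibit a single element of $P_3S_{d-3}$ lying outside $wS_{d-2}$, i.e., a single element of $P_3 S_{d-3}$ not divisible by $w$ in $\CC[u,w]$.

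The key ingredient is to expand $P_3=\tfrac{3}{2}uw+\alpha u^3$ as in (\ref{p3}), where $\alpha:=-\tfrac{1}{2}a(a+1)(2a+1)$. Under the standing hypothesis $a\notin\{0,-1,-\tfrac{1}{2}\}$ we have $\alpha\neq 0$. Since $u$ and $w$ are algebraically independent generators of $\CC[u,w]$, for any $F\in\CC[u,w]$ the product $P_3F=\tfrac{3}{2}uwF+\alpha u^3F$ is divisible by $w$ if and only if $\alpha u^3 F$ is, which in turn holds if and only if $F$ itself is divisible by $w$.

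Finally, I would apply Lemma \ref{l3} a second time, now to $S_{d-3}$ (legitimate since $d-3\ge 3$): the elements of $S_{d-3}$ divisible by $w$ form exactly the subspace $wS_{d-5}$, which has codimension $1$ in $S_{d-3}$. Choose any $F\in S_{d-3}\setminus wS_{d-5}$; by the previous step $P_3F$ is not divisible by $w$ and hence not in $wS_{d-2}$. The codimension-$1$ reduction from the first paragraph then forces $S_d=wS_{d-2}+P_3S_{d-3}$.

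The only substantive step — and the only place the hypothesis on $a$ enters — is the nonvanishing of $\alpha$; were $\alpha=0$, then $P_3$ would itself be a multiple of $w$, the product $P_3 F$ would be automatically divisible by $w$ for every $F$, and the argument would collapse. The bound $d\ge 6$ is sharp for this method, being exactly what is required to apply Lemma \ref{l3} to both $S_d$ and $S_{d-3}$ simultaneously.
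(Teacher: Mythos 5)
Your proof is correct and follows essentially the same route as the paper: both arguments use Lemma \ref{l3} (at degree $d-3$) to produce an element of $S_{d-3}$ not divisible by $w$ (equivalently, with nonzero $u^{d-3}$-coefficient), then use the $u^3$-term of $P_3$ from \eqref{p3} --- nonzero precisely because $a\neq 0,-1,-\tfrac12$ --- to see that multiplying by $P_3$ produces an element of $S_d$ outside the codimension-one subspace $wS_{d-2}$. The only cosmetic difference is that you phrase the obstruction as non-divisibility by $w$ while the paper phrases it as a nonvanishing coefficient of $u^d$; these are the same condition.
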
 

\begin{proof} 
By Lemma \ref{l3}, $S_{d-3}$ has an element $H$ with a nonzero coefficient of $u^{d-3}$ (as not all elements are divisible by $w$).
So $P_3H\in S_d$ has a nontrivial coefficient of $u^d$, if $a$ is not $0,-1,-1/2$ (see \eqref{p3}).
The statement follows from this and Lemma \ref{l3}.
\end{proof} 

\begin{corollary} \label{genraa}
We have the following:
\begin{enumerate}[\indent \rm (i)]
\item $S$ is generated by $1,P_4,P_5$ as a module over ${\Bbb C}[P_2,P_3]$.

\item $R_{a,a}=S$. 
\end{enumerate}
\end{corollary}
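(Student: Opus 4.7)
The plan is to prove (i) by induction on the grading degree, after which (ii) is a short deduction. By Lemma \ref{l1}, every $P_i$ lies in $S$, so the module $M := \CC[P_2,P_3] \cdot 1 + \CC[P_2,P_3] \cdot P_4 + \CC[P_2,P_3] \cdot P_5$ is contained in $S$; the task is to prove the reverse inclusion in each degree $d$. For $d \geq 6$, Lemma \ref{l4} gives $S_d = P_2 S_{d-2} + P_3 S_{d-3}$, and the inductive hypothesis applied to $S_{d-2}$ and $S_{d-3}$ immediately forces $S_d \subseteq M$. The base cases are $d \leq 5$, where Lemma \ref{l2} pins down the dimensions: $\dim S_0 = 1$, $\dim S_1 = 0$, $\dim S_2 = \dim S_3 = 1$, $\dim S_4 = \dim S_5 = 2$.

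These base cases are handled by exhibiting an explicit spanning set from $M$ of matching size in each degree: the element $1$ in degree $0$; $P_2 = w$ in degree $2$; $P_3$ in degree $3$ (nonzero by \eqref{p3} when $a \neq 0, -1, -1/2$); the pair $\{P_2^2, P_4\}$ in degree $4$; and the pair $\{P_2 P_3, P_5\}$ in degree $5$. The zero- and one-dimensional cases are immediate from the dimension count, so the real content is the linear independence of $P_2^2$ and $P_4$, and of $P_2 P_3$ and $P_5$. I would verify these by direct comparison of bihomogeneous coefficients in $x, y$: for instance, reading off the $x^4$ and $x^3 y$ coefficients of $P_2^2$ and $P_4$ yields two ratios in $a$ whose equality reduces to a polynomial equation in $a$ that fails generically in the allowed range.

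Part (ii) is then immediate. Lemma \ref{l1} gives $R_{a,a} \subseteq S$, while $R_{a,a}$ contains $P_2, P_3, P_4, P_5$ by definition, hence contains $M = S$ by (i), so $R_{a,a} = S$. The main obstacle in the argument is the linear-independence check in degrees $4$ and $5$; the rest is a clean induction followed by a tautological deduction. A secondary subtlety to keep in mind is that the independence can degenerate at isolated values of $a$ within the allowed range (for example at $a = 1$, where the Newton identities in three variables with vanishing first power sum force $P_4 = \tfrac{1}{2}P_2^2$), in which case the Hilbert series legitimately drops, as anticipated by Remark \ref{exce}.
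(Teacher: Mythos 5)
Your proposal follows the same route as the paper: Lemma \ref{l4} reduces everything to degrees $\le 5$, and the low-degree check is done by hand. The paper dismisses that check with ``it is easy to check''; your version makes it explicit, and in doing so you put your finger on a real defect in the statement rather than a mere ``subtlety''. In degree $4$ one computes $P_4=\tfrac{a}{2}v^2+au^2v+(a^4-\tfrac a2)u^4$ against $P_2^2=a^2v^2+2a^3u^2v+a^4u^4$, and the relevant $2\times 2$ minor is $a^3(a-1)$; in degree $5$ the coefficient of $u^3v$ in $P_2P_3$ is $-\tfrac{a^2}{2}(2a-1)(a-1)$ while $P_5$ has none, and one checks that at the roots the remaining coefficients do become proportional. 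So $\{P_2^2,P_4\}$ degenerates exactly at $a=1$ (where $P_4=\tfrac12 P_2^2$) and $\{P_2P_3,P_5\}$ exactly at $a=1$ and $a=\tfrac12$ (where $P_5=\tfrac56P_2P_3$ and $P_5=\tfrac53P_2P_3$ respectively). Since $\dim S_4=\dim S_5=2$ for all admissible $a$ by Lemma \ref{l2}, the corollary as stated, and hence the Hilbert-series claim of Proposition \ref{raa}, actually fail at $a=1$ and $a=\tfrac12$ (at $a=1$ one has $R_{1,1}=\CC[e_2,e_3]\subsetneq S$ by Newton's identities); these are precisely the exceptional values predicted by Remark \ref{exce} for $\Lambda_{2,1,a}$, and $R_{a,a}$ is still CM there for a different reason. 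Your argument is correct and complete for $a\notin\{0,-1,-\tfrac12,\tfrac12,1\}$, which is the most that can be proved; you should state that exclusion in the conclusion rather than leave the degeneration as an aside, since at those two points the inductive step has nothing to stand on. The deduction of (ii) from (i) is the same as the paper's and is fine.
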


\begin{proof} (i) Lemma \ref{l4} implies that $S$ is generated over $\Bbb C[P_2,P_3]$ by $S_{\le 5}$. This implies the statement, as it is easy to check that all elements 
of $S_{\le 5}$ belong to $\Bbb C[P_2,P_3](\Bbb C\cdot 1\oplus \Bbb C\cdot P_4\oplus \Bbb C\cdot P_5)$.

(ii) Since by Lemma \ref{l1}, $R_{a,a}\subset S$, this follows from (i). 
\end{proof}

Since $R_{a,a}$ clearly has rank 3 over $\Bbb C[P_2,P_3]$, 
Corollary \ref{genraa} implies that $R_{a,a}$ is freely generated over $\Bbb C[P_2,P_3]$ by 
$1,P_4,P_5$, which implies Proposition \ref{raa}. 

\subsection{\texorpdfstring{The case $a\ne b$, $a\ne 1$, $b\ne 1$}{The case a!=b, a!=1, b!=1}}

It remains to consider the case 
$a\ne b$, $a\ne 1$, $b\ne 1$. We assume that $a,b,a+b\ne 0,-1$. 

\begin{proposition}\label{bira} 
If $a\ne b$, $a\ne 1$, $b\ne 1$ then the rank of $R_{a,b}$ over $\Bbb C[P_2,P_3]$ is $6$. 
\end{proposition}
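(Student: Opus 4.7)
The plan is to reduce the rank computation to a B\'ezout-type count. Since $a$, $b$, and $1$ are pairwise distinct, the stabilizer $S_\lambda\subset S_3$ of $\lambda=(a,b,1)$ is trivial, so by the birationality of $\pi$ recorded in the footnote, the inclusion $R_{a,b}\hookrightarrow \Bbb C[x,y]$ induces an equality of fraction fields $\operatorname{Frac}(R_{a,b})=\Bbb C(x,y)$. Hence $R_{a,b}$ and $\Bbb C[x,y]$ have the same rank as modules over $\Bbb C[P_2,P_3]$, and the problem reduces to computing $\operatorname{rank}_{\Bbb C[P_2,P_3]} \Bbb C[x,y]$.

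Since $P_2$ and $P_3$ are homogeneous and make $\Bbb C[x,y]$ module-finite over $\Bbb C[P_2,P_3]$ by Proposition~\ref{fingen} (our standing hypotheses $a,b,a+b\ne 0,-1$ ensure all $\lambda_S\ne 0$), this rank equals $\dim_{\Bbb C}\Bbb C[x,y]/(P_2,P_3)$ provided $(P_2,P_3)$ is a regular sequence --- equivalently, in two graded variables, provided $P_2$ and $P_3$ share no nontrivial common factor. Granting this, the Hilbert series computation
\[
\frac{(1-t^2)(1-t^3)}{(1-t)^2} \;=\; (1+t)(1+t+t^2) \;=\; 1+2t+2t^2+t^3
\]
gives the total dimension $1+2+2+1=6$. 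The entire argument therefore rests on showing that $P_2$ and $P_3$ have no common zero in $\Bbb C^2$ other than $(0,0)$.

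To avoid the unpleasant route of factoring the quadratic $P_2=a(a+1)x^2+2ab\,xy+b(b+1)y^2$ explicitly and plugging its linear roots into $P_3$, I would package the $P_i$'s into a generating function. Setting $w:=-ax-by$ and $T(t):=\sum_{i\ge 0} P_{i+1}\,t^i$, one has
\[
T(t)\;=\;\frac{ax}{1-xt}+\frac{by}{1-yt}+\frac{w}{1-wt}.
\]
Combining over the common denominator $(1-xt)(1-yt)(1-wt)$, the numerator is a polynomial in $t$ of degree $\le 2$. Its constant term is $P_1=ax+by+w=0$; a short substitution using $w=-ax-by$ identifies its $t$-coefficient as exactly $P_2(x,y)$ and its $t^2$-coefficient as $xyw(a+b+1)$. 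Comparing with $T(t)=P_2 t + P_3 t^2+\cdots$ then yields the key identity
\[
P_3 \;=\; P_2\cdot (x+y+w)\;+\;xyw(a+b+1).
\]

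Now, if $(x_0,y_0)\ne(0,0)$ were a common zero of $P_2$ and $P_3$, this identity would force $x_0 y_0 w_0(a+b+1)=0$ with $w_0=-ax_0-by_0$; since $a+b+1\ne 0$, one of $x_0$, $y_0$, $w_0$ must vanish. Each case collapses immediately to $(x_0,y_0)=(0,0)$ using the nonvanishing of $a,b,a+1,b+1,a+b$ (for instance, $x_0=0$ reduces $P_2$ to $b(b+1)y_0^2$, forcing $y_0=0$). Thus $(P_2,P_3)$ is a regular sequence and the rank is $2\cdot 3=6$. The main obstacle I anticipate is precisely this non-common-factor step: a direct resultant or discriminant computation would involve awkward $\sqrt{-ab(a+b+1)}$-type expressions, whereas the generating-function identity above compresses the whole argument into a one-line case analysis.
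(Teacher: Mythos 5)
Your argument is correct and rests on the same core fact as the paper's one-line proof --- the B\'ezout count $2\cdot 3=6$ for the map $(P_2,P_3)\colon \CC^2\to\CC^2$, after using the birationality of $\pi$ to replace $R_{a,b}$ by $\CC[x,y]$ (the paper leaves this reduction implicit as well). The additional work you do --- the generating-function identity $P_3=P_2(x+y+w)+xyw(a+b+1)$ showing that $P_2,P_3$ have no common zero besides the origin --- is a correct, self-contained verification of the hypothesis that the paper's B\'ezout citation glosses over; note that it also follows directly from (the proof of) Proposition \ref{fingen} for $\lambda=(a,b,1)$, since $V(P_1,P_2,P_3)=\{0\}$ in $\CC^3$ restricts to $V(P_2,P_3)=\{0\}$ on the plane $P_1=0$.
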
 

\begin{proof} This follows from the fact that the equations $P_2(x,y)=c_2$, $P_3(x,y)=c_3$ have 6 solutions for generic 
$c_2,c_3$ (by Bezout's theorem). 
\end{proof} 

\begin{corollary} \label{Qpol}$R_{a,b}$ is CM if and only if the Hilbert polynomial $Q(t)$ of 
the finite dimensional algebra $R_{a,b}/(P_2,P_3)$ satisfies the condition $Q(1)=6$.  
\end{corollary}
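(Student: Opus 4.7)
The plan is to reduce the question to a rank comparison using standard graded commutative algebra. As noted in the paragraph preceding the corollary, $R_{a,b}$ is CM if and only if it is free as a graded $\Bbb C[P_2,P_3]$-module; here $P_2,P_3$ constitute a graded homogeneous system of parameters because $R_{a,b}$ is a two-dimensional graded algebra and is module-finite over $\Bbb C[P_2,P_3]$ by the cited finite generation results. By Proposition \ref{bira}, the rank of this module is $6$, so the task is to pin down when the number of generators matches the rank.

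The next step is to count minimal generators via graded Nakayama. A homogeneous subset of $R_{a,b}$ generates it as a $\Bbb C[P_2,P_3]$-module if and only if its image spans the graded quotient $R_{a,b}/(P_2,P_3)R_{a,b}$, so the minimal number of graded generators equals $\dim_{\Bbb C} R_{a,b}/(P_2,P_3)R_{a,b} = Q(1)$. Since any generating set must have size at least the rank, this already forces $Q(1)\ge 6$.

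To conclude, suppose $Q(1)=6$ and lift a $\Bbb C$-basis of $R_{a,b}/(P_2,P_3)R_{a,b}$ to homogeneous elements $e_1,\dots,e_6\in R_{a,b}$. These generate, producing a graded surjection $\Bbb C[P_2,P_3]^{6}\twoheadrightarrow R_{a,b}$ whose kernel has generic rank $6-6=0$. Hence the kernel is a torsion submodule of the torsion-free module $\Bbb C[P_2,P_3]^{6}$ over the domain $\Bbb C[P_2,P_3]$, so it vanishes, and $R_{a,b}$ is free of rank $6$. Conversely, if $R_{a,b}$ is CM, then it is free of rank $6$ over $\Bbb C[P_2,P_3]$, whence $R_{a,b}/(P_2,P_3)R_{a,b}\cong \Bbb C^{6}$ and $Q(1)=6$.

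The argument is essentially bookkeeping: the only moving parts are the standard identification of CM with freeness over a Noether normalization, graded Nakayama, and the rank bound from Proposition \ref{bira}. There is no serious obstacle; the mild point worth flagging is that the inequality $Q(1)\ge 6$ is automatic, so Cohen--Macaulayness is captured by the single numerical equality $Q(1)=6$, which is well suited to explicit (e.g.\ computer-algebra) verification.
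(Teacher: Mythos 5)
Your proof is correct and follows essentially the same route as the paper's (which is just a compressed version of your argument): CM is equivalent to freeness over $\mathbb{C}[P_2,P_3]$, the rank is $6$ by Proposition \ref{bira}, graded Nakayama identifies $Q(1)$ with the minimal number of generators, and freeness holds exactly when this number equals the rank. Your explicit torsion argument for the implication $Q(1)=6\Rightarrow$ free is the detail the paper leaves implicit, and it is correct.
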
 

\begin{proof} $R_{a,b}$ is CM if and only if it is free over $\Bbb C[P_2,P_3]$ (of rank $6$). 
In this case, the polynomial $Q$ is the Hilbert polynomial of the generators. 
Otherwise, $Q(1)> 6$.  
\end{proof} 

Now we have the following proposition: 

\begin{proposition}\label{degrees} 
Write the Hilbert series of $R_{a,b}$ as $h_{R_{a,b}} = \frac{q(t)}{(1-t^2)(1-t^3)}$. If $a\ne b, a\ne 1, b\ne 1$ and 
$$
(a,b)\ne (3,2),(2,3),(1/2,3/2),(3/2,1/2),(1/3,2/3),(2/3,1/3),
$$ 
then   
$$
q(t)=1+t^4+t^5+t^6+t^7+t^8+Ct^9+\cdots
$$
where $C=1$ if $a\ne b+1$, $b\ne a+1$, $1\ne a+b$. 
\end{proposition}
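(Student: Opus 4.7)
\emph{Plan.} The plan is to compute the graded dimensions $h_d := \dim_\CC(R_{a,b})_d$ for $d \leq 9$ and read off $q_d$ from the coefficient identity
\[
q_d = h_d - h_{d-2} - h_{d-3} + h_{d-5}
\]
(with $h_d := 0$ for $d < 0$), which follows from $q(t) = h_{R_{a,b}}(t)(1-t^2)(1-t^3)$.

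For each $d$, the vector space $(R_{a,b})_d$ is spanned in $\CC[x,y]_d$ by the products $\prod_i P_{\mu_i}$ as $\mu$ runs over partitions of $d$ whose parts are all $\geq 2$. Using $P_i = ax^i + by^i + (-ax-by)^i$ to expand each such product in the basis $x^d, x^{d-1}y, \ldots, y^d$, one obtains a matrix with entries in $\CC[a,b]$, and $h_d(a,b)$ equals the rank of this matrix. I would aim to show that generically $(h_0, h_1, h_2, \ldots, h_9) = (1, 0, 1, 1, 2, 2, 4, 4, 6, 7)$; substituting gives $q_0 = 1$, $q_1 = q_2 = q_3 = 0$, $q_4 = \cdots = q_8 = 1$, and $q_9 = 1$, matching the stated form of $q(t)$ with $C = 1$.

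The bulk of the work is identifying the loci on which the ranks drop. For $d \leq 5$, the stated hypotheses ($a, b, a+b \neq 0$; $a, b \neq 1$; $a \neq b$) already suffice for the at-most-two spanning monomials to be independent. For $d = 6, 7, 8$, extra minors must be verified nonzero, and their simultaneous vanishing locus is the finite set of six pairs excluded in the statement --- all equivalent, via the rescaling of Corollary \ref{multiple} and the symmetry $(a,b) \leftrightarrow (b,a)$, to the partition $(3,2,1)$. For $d = 9$, among the eight spanning monomials (indexed by the partitions $(9), (7,2), (6,3), (5,4), (5,2,2), (4,3,2), (3,3,3), (3,2,2,2)$), one linear relation exists generically; I would exhibit it explicitly by solving a linear system over $\CC(a,b)$. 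The coefficients come out as rational functions of $a, b$ with denominator dividing $(a-b-1)(b-a-1)(a+b-1)$, so the relation degenerates precisely when $a = b + 1$, $b = a + 1$, or $a + b = 1$; outside these loci $h_9 = 7$ and hence $C = 1$.

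The main obstacle is establishing the degree-$9$ relation and pinning down its exact denominator. This is a concrete linear-algebra computation in $\CC[x,y]_9$ (of dimension $10$) that is best carried out with computer algebra (such as Macaulay2, as elsewhere in the paper). The observation that the three exceptional lines correspond to the geometric condition ``one of the weights $a, b, 1$ equals the sum of the other two'' --- the same criterion appearing in Theorem \ref{noCM} --- provides a helpful sanity check, but the factorization of the denominator itself would need to be confirmed by direct calculation.
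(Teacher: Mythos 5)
Your proposal is correct and follows essentially the same route as the paper: span each graded piece by the monomials in the $P_i$ indexed by partitions with parts $\ge 2$, compute the generic ranks $(1,0,1,1,2,2,4,4,6,7)$ of the resulting matrices over $\CC[a,b]$ by computer, locate the rank-drop locus via ideals of minors through degree $8$, and handle degree $9$ by a single well-chosen $7\times 7$ minor (the Cramer denominator of your explicit relation is exactly such a minor, and the paper likewise uses one that factors over $\QQ[a,b]$ rather than the full ideal of minors). The only caveat is that a single minor yields just an upper bound on the bad locus, so ``degenerates precisely when'' should read ``can only degenerate when'' --- which is all the one-directional statement of the proposition requires.
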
 

\begin{proof}
The proof is by a computer calculation in Macaulay2. We are trying to calculate the Hilbert series of the image of a polynomial map, which is an implicitization problem. However, that problem is too difficult, and we only need partial information. A naive approach to compute the dimension of the degree $d$ piece of $R_{a,b}$ is to compute the dimension of the linear span of the space of all monomials of (weighted) degree $d$ in the $P_i$. 

This is what is done in {\tt computations2.m2} and we first work with generic values of $a,b$ to determine the generic dimensions of these spaces. For our result, we just need $d=1,\dots,9$, in which case the dimensions are $1, 1, 1, 2, 2, 4, 4, 6, 7$ which gives 
\[
q(t) = 1+ t^4 + t^5+t^6+t^7+t^8+t^9+ \cdots
\]
for generic $a,b$.

This dimension count is actually determining the rank of a matrix: each monomial in the $P_i$ is written as a vector whose entries are the coefficients (which live in $\Bbb C[a,b]$) of its expression as a polynomial in $x,y$. To determine when this dimension can drop, we calculate the ideal of minors of each rank and decompose this ideal to get the bad conditions on $a,b$ as above. This works up to degree $8$, but degree $9$ is too hard to handle directly; instead we find a special $7 \times 7$ minor which completely factors over $\Bbb Q[a,b]$ since this gives an upper bound for the bad locus.
\end{proof} 

\begin{corollary}\label{noncm}
$R_{a,b}$ is not CM outside of the lines 
$$
a=b, a=1, b=1, a=b+1, b=a+1, 1= a+b.
$$ 
In particular, $X_{(a,b,c)}$ is not CM if $a>b>c$ and $a\ne b+c$.  
\end{corollary}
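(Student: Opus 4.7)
The plan is to combine the Hilbert-series data of Proposition \ref{degrees} with the numerical CM criterion of Corollary \ref{Qpol}, extracting a contradiction whenever $(a,b)$ avoids all six lines.

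First, I would absorb the six exceptional pairs of Proposition \ref{degrees} into the lines listed in the corollary: direct inspection shows $(3,2)$ and $(3/2,1/2)$ lie on $a=b+1$; $(2,3)$ and $(1/2,3/2)$ lie on $b=a+1$; and $(1/3,2/3)$ and $(2/3,1/3)$ lie on $a+b=1$. Consequently, for any $(a,b)$ off the six lines of Corollary \ref{noncm}, both parts of Proposition \ref{degrees} apply and the numerator satisfies
\[
q(t) \;=\; 1 + t^4 + t^5 + t^6 + t^7 + t^8 + t^9 + \cdots,
\]
with further nonnegative coefficients in higher degrees.

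Next, I would argue by contradiction. If $R_{a,b}$ were CM, then since $P_2,P_3$ is a homogeneous system of parameters and $R_{a,b}$ has rank $6$ over $\mathbb{C}[P_2,P_3]$ by Proposition \ref{bira}, CM-ness forces $R_{a,b}$ to be a free graded $\mathbb{C}[P_2,P_3]$-module of rank $6$. In that case $q(t)$ is precisely the generating polynomial for the degrees of a homogeneous basis, in particular a polynomial with nonnegative integer coefficients whose sum is $q(1)=6$. But the partial sum of the coefficients of $q(t)$ visible through degree $9$ is already $7$, a contradiction. Hence $R_{a,b}$ is not CM off the six lines.

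Finally, for the statement about $X_{(a,b,c)}$: given positive integers $a>b>c$ with $a\ne b+c$, apply Corollary \ref{multiple} (with $k=c$) to identify $X_{(a,b,c)}/S_n$ with $Y_{(a/c,\,b/c,\,1)}$, so its CM-ness is equivalent to that of $R_{a/c,\,b/c}$. The hypotheses $a/c>b/c>1$ and $a/c\ne b/c+1$ rule out the lines $a=b$, $a=1$, $b=1$, $a=b+1$; the inequality $a/c>b/c$ rules out $b=a+1$; and $a/c,b/c>1$ rules out $a+b=1$. Thus all six lines are avoided, the previous step applies, and $X_{(a,b,c)}/S_n$ is not CM. Since CM-ness of $X_\lambda$ implies CM-ness of $X_\lambda/S_n$, $X_{(a,b,c)}$ is not CM either. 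The substantive work has already been done inside Proposition \ref{degrees} (the Macaulay2 rank computation and the identification of the ``bad locus''); the main obstacle one might worry about—namely, that some exceptional point in Proposition \ref{degrees} lies off all six lines—is dispatched by the bookkeeping in the first step above.
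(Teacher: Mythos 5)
Your proposal is correct and follows essentially the same route as the paper: both rest on Proposition \ref{degrees} giving a coefficient sum of at least $7$ in the numerator $q(t)$ versus the rank $6$ from Proposition \ref{bira}, together with the observation that CM-ness would force freeness over $\CC[P_2,P_3]$. The only cosmetic difference is that the paper phrases the contradiction via the quotient $R_{a,b}/(P_2,P_3)$ and a case split on whether $\{P_2,P_3\}$ is a regular sequence, while you argue directly that freeness would make $q(t)$ the degree-generating polynomial of a basis; your explicit bookkeeping that the six exceptional pairs lie on the listed lines, and the verification for $X_{(a,b,c)}$, are both accurate.
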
 

\begin{proof} If $\{P_2,P_3\}$ does not form a regular sequence in $R_{a,b}$ then $R_{a,b}$ is not CM. If it does, then $q(t)=Q(t)$, and by Proposition \ref{degrees}, outside of the given lines we have $Q(1)\ge 7$, so the module is not free.  
\end{proof} 

\subsection{The case $a=b+1$}

Now we consider the case when $a=b+1$ (the other remaining cases, $b=a+1$ and $1=a+b$, are reduced to it by permutation of coordinates). 
Now we have the only parameter $b\ne 0$, which should not equal to $-1,-2,-1/2$.

\begin{proposition}\label{degrees1} 
The Hilbert series of $R_{3,2}$ is $\frac{q(t)}{(1-t^2)(1-t^3)}$ where 
$$
q(t)=1+t^4+t^5+t^6+t^8+t^9+t^{10} + \cdots,
$$
so $R_{3,2}$ is not CM.
\end{proposition}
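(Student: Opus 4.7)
The plan is to specialize the strategy of Proposition \ref{degrees} to $(a,b) = (3,2)$. The Macaulay2 setup there is already arranged to expose degenerations at particular $(a,b)$, so one simply reruns it with $a=3, b=2$ substituted from the outset: for each $d$ up to (say) $10$, enumerate the weighted-degree-$d$ monomials in $P_2, P_3, P_4, \dots$, expand them as polynomials in $x,y$ via $P_i = 3x^i + 2y^i + (-3x-2y)^i$, form the coefficient matrix, and take its rank. This gives $\dim_{\CC}(R_{3,2})_d$ through degree $10$, and multiplying the truncated series $\sum_d \dim_{\CC}(R_{3,2})_d\, t^d$ by $(1-t^2)(1-t^3)$ produces the initial segment of $q(t)$. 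I would then verify that this initial segment is $1 + t^4 + t^5 + t^6 + t^8 + t^9 + t^{10}$; the crucial feature is the vanishing at degree $7$, which is where the specialization $(3,2)$ departs from the generic behavior computed in Proposition \ref{degrees}.

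The non-CM conclusion then follows by the same dichotomy used in Corollary \ref{noncm}. By Proposition \ref{bira}, $R_{3,2}$ has rank $6$ over $\CC[P_2, P_3]$. If $\{P_2, P_3\}$ fails to be a regular sequence in $R_{3,2}$, then $R_{3,2}$ is not free as a $\CC[P_2, P_3]$-module, so not CM. If $\{P_2, P_3\}$ is a regular sequence, then $q(t)$ equals the Hilbert series $Q(t)$ of the finite-dimensional algebra $R_{3,2}/(P_2, P_3)$, which has non-negative coefficients; the seven nonzero coefficients already visible through degree $10$ force $Q(1) \ge 7 > 6$, contradicting the criterion of Corollary \ref{Qpol}. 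Either way, $R_{3,2}$ is not CM.

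The main obstacle is mostly bookkeeping. Since $(3,2)$ lies in the exceptional locus singled out in Proposition \ref{degrees} (it is exactly the small instance $a = b+1$ with $b = 2$), the dimensions of $(R_{a,b})_d$ at this specialization can and do jump above the generic dimensions, so one cannot simply quote the generic computation. One must rerun the rank computation with $(a,b)=(3,2)$ plugged in and verify that the jump occurs precisely as stated — the loss of a contribution at $t^7$ and the appearance of a new contribution at $t^{10}$ in the displayed window. No new theoretical ingredients beyond Propositions \ref{bira} and \ref{degrees} and Corollary \ref{Qpol} are required.
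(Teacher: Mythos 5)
Your proposal is correct and matches the paper's approach: the paper's proof of Proposition \ref{degrees1} is precisely an explicit Macaulay2 rank computation at the specialization $(a,b)=(3,2)$ (in the file {\tt computations3.m2}), with the non-CM conclusion following from the rank-$6$ dichotomy of Proposition \ref{bira} and Corollary \ref{Qpol} exactly as you describe. You correctly identify both the need to rerun the computation at the specialized values (since $(3,2)$ lies in the exceptional locus of Proposition \ref{degrees}) and the fact that seven nonzero coefficients of $q(t)$ through degree $10$ already force $Q(1)\ge 7>6$.
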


This is done in \verb+computations3.m2+ by explicit calculation. 


Also, we have the following proposition, conjectured by the authors in the original version of this paper and proved by 
Eric Rains using MAGMA: 

\begin{proposition}\label{bplus1b}
For $b\ne 0, \pm\frac{1}{2},\pm 1,\pm 2$, $R_{b+1,b}$ is CM with  
\[
Q(t)=1+t^4+t^5+t^6+t^7+t^8. 
\]
\end{proposition}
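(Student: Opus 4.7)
The plan is to reduce the statement to verifying a single Hilbert series, namely
$$h(t) = \frac{1 + t^4 + t^5 + t^6 + t^7 + t^8}{(1-t^2)(1-t^3)},$$
and then to extend the rank-of-minors computation of Proposition \ref{degrees} to the line $a = b+1$.

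By Proposition \ref{bira}, for $b$ outside the exclusion set the module $R_{b+1,b}$ has rank $6$ over $\Bbb C[P_2, P_3]$. Since the candidate $Q(t) = 1 + t^4 + t^5 + t^6 + t^7 + t^8$ satisfies $Q(1) = 6$, Corollary \ref{Qpol} reduces Cohen--Macaulayness to the assertion that the Hilbert series of $R_{b+1,b}$ is exactly $h(t)$. The proof then has two parts. First, run the linear-algebra routine of Proposition \ref{degrees} specialized to $a = b+1$ degree by degree, verifying that $\dim R_{b+1, b, d}$ matches the coefficient of $t^d$ in $h(t)$ for each $d$ up to some cutoff $N$. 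Second, exhibit an explicit $\Bbb C[P_2, P_3]$-module basis of $R_{b+1, b}$ of size $6$ concentrated in degrees $0, 4, 5, 6, 7, 8$ (with $\{1, P_4, P_5, P_6, P_7, P_8\}$ being the natural first guess, possibly modified by lower-weight corrections), and check that their $\Bbb C[P_2, P_3]$-span exhausts $R_{b+1, b, d}$ for all $d$; this propagates from a finite check past some threshold by standard degree-counting. The excluded values $b \in \{0, \pm 1/2, \pm 1, \pm 2\}$ emerge as the vanishing locus of the leading $\Bbb Q[b]$-coefficients of the minors witnessing these spans, analogously to the primary decomposition step at the end of the proof of Proposition \ref{degrees}.

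The main obstacle is the generation part: on the line $a = b+1$ the drop from the generic Hilbert series is not explained by a clean algebraic identity of the kind provided by the quasi-invariance of Lemma \ref{l1} in the proof of Proposition \ref{raa}, and so the argument must proceed by direct symbolic manipulation over $\Bbb Q[b]$. The naive coefficient-matrix approach of \verb+computations2.m2+ becomes infeasible in higher degrees once the parameter $b$ is retained symbolically, which is why an implementation in a system like MAGMA --- with efficient handling of parametric Gr\"obner bases --- is essentially required. It would be preferable to replace this computation with a conceptual proof via the interpolated Cherednik-algebra framework alluded to in the remark following Conjecture \ref{badset}, in which $a = b+1$ should correspond to a special intersection of reducibility loci whose rigidity forces the lower Hilbert series; but such an interpretation appears to be beyond the reach of the techniques used in the rest of the paper.
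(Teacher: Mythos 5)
Your overall strategy---exhibit six explicit elements in degrees $0,4,5,6,7,8$ generating $R_{b+1,b}$ over $\Bbb C[P_2,P_3]$ and conclude freeness because the number of generators equals the rank---is the same as the paper's (which takes $1,P_4,P_5,P_6,P_7,P_4^2$). However, two steps do not hold up. First, the opening reduction is not valid: Corollary~\ref{Qpol} is about the Hilbert series $Q(t)$ of the quotient $R_{a,b}/(P_2,P_3)$, not of $R_{a,b}$ itself, and these differ exactly when freeness is in doubt. Knowing that the Hilbert series of $R_{b+1,b}$ equals $\frac{1+t^4+t^5+t^6+t^7+t^8}{(1-t^2)(1-t^3)}$ in every degree would still not yield Cohen--Macaulayness: a torsion-free graded module of rank $6$ over $A=\Bbb C[P_2,P_3]$ can have precisely this Hilbert series without being free (for instance $A\oplus\mathfrak{m}(-2)\oplus A(-6)\oplus A(-7)^{2}\oplus A(-8)$ with $\mathfrak{m}=(P_2,P_3)$), so generation by six elements must be proved directly and cannot be read off from dimension counts.

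Second, and more seriously, your mechanism for verifying generation ``for all $d$'' does not terminate. The degree-by-degree span computation certifies only finitely many degrees, and there is no ``standard degree-counting'' that propagates it past a threshold: a graded submodule of a free $A$-module can require generators in arbitrarily high degree, and the paper explicitly notes (remark following Proposition~\ref{fingen}) that no a priori bound on the generation degree of these algebras is known. The missing idea---which is the actual content of the paper's proof---is a finiteness mechanism coming from integrality: each of $x,y,z$ satisfies a monic degree-$6$ polynomial over $A$, so the product $Q_xQ_yQ_z=\sum_{j=0}^{18}a_ju^j$ of these annihilators forces the linear recursion $\sum_{j=0}^{18}a_jP_{n+j}=0$ with $a_j\in A$. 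Consequently, once one checks that $P_8,\dots,P_{17}$ lie in the $A$-module $M$ spanned by the six candidates \emph{and} that $M$ is closed under multiplication ($T_iT_j\in M$), it follows that $M$ contains every $P_n$ and hence all of $R_{b+1,b}$. This is what converts the infinite verification into the finite symbolic (MAGMA) computation; without this step, or some substitute for it, your argument does not close.
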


\begin{proof}  
Let us show that $R_{b+1,b}$ is generated as a module over $\Bbb C[P_2,P_3]$ by 
\[
T_1=1,\quad T_2=P_4,\quad T_3=P_5,\quad T_4=P_6,\quad T_5=P_7,\quad T_6=P_4^2.
\]
Since this module is of generic rank $6$, this means that it is a free module, 
which implies the theorem. 

It is easy to compute (e.g., using MAGMA) 
that each of the elements $x,y,z$ is annihilated by a degree $6$ 
monic polynomial over $\Bbb C[P_2,P_3]$; 
let us denote these polynomials by $Q_x$, $Q_y$, $Q_z$. Let 
$$
Q(u):=Q_x(u)Q_y(u)Q_z(u)=\sum_{j=0}^{18} a_ju^j, 
$$
 a monic polynomial of degree $18$ (i.e., $a_{18}=1$). 
It is clear that the $P_i$ satisfy the linear recursion 
$$
\sum_{i=0}^{18}a_iP_{n+i}=0,\quad n\ge 0.
$$
So it suffices to check that $P_8,\dots, P_{17}$  belong to the $\Bbb C[P_2,P_3]$-module $M$ generated by $T_1,\dots, T_6$, 
and that this module is in fact an algebra (i.e., $T_iT_j\in M$). 
This is done using MAGMA (the computation takes less than a second).  
\end{proof} 

\begin{remark} Note that for $b=1$, $R_{b+1,b}$ is still CM, but has a different Hilbert series. 
\end{remark} 

\section{The CM property of $X_{b+c,b,c}$}

Thus, to study the CM properties of $X_{b+c,b,c}$, where $b>c\ge 1$, we need more information. 
These varieties turn out to be not CM, but the necessary information is obtained not from the ring of invariants 
(which is CM unless $b=2c$), but rather from the isotypic component of the reflection representation. 

\begin{proposition}\label{qplusrqr}
The variety $X_{b+c,b,c}$ is not CM for any positive integers $b>c$. 
\end{proposition}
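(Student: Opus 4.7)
Let $\lambda = (b+c, b, c)$ with $b > c \ge 1$, $n = 2(b+c)$, set $R = \mathcal{O}(X_\lambda^0)$, and let $B = \mathbb{C}[P_2, P_3] \subset R^{S_n}$. For many $(b,c)$ the invariant subring $R^{S_n}$ is itself Cohen--Macaulay (for example $c=1$, $b \ge 3$, by Proposition~\ref{bplus1b}), so Hochster--Roberts alone does not suffice; instead the plan is to show that the $S_n$-isotypic component of the reflection representation $V = V_{(n-1,1)}$ fails to be free as a $B$-module, which rules out the Cohen--Macaulayness of $R$, hence of $X_\lambda$.

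By Proposition~\ref{fingen}, $R$ is a finite module over the $2$-dimensional polynomial ring $B$, so if $R$ were CM it would be graded $B$-free, and each multiplicity module $M_\mu := \Hom_{S_n}(V_\mu, R)$ would be $B$-free as well. For $V = V_{(n-1,1)}$ I first compute the generic $B$-rank of $M_V$. The generic fibre of the finite morphism $X_\lambda^0 \to {\rm Spec}\,B$, as an $S_n$-set, consists of $6$ disjoint copies of $S_n/H$, where $H = S_{b+c} \times S_b \times S_c$ is the stabilizer of a generic point of $E_\lambda^0$; the factor of $6$ is the generic $B$-degree of $R^{S_n}$ from Proposition~\ref{bira}. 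Frobenius reciprocity then gives
\[
{\rm rank}_B\,M_V \;=\; 6\cdot \dim V^H \;=\; 12,
\]
since $(\mathbb{C}^n)^H$ is $3$-dimensional (one scalar per block) and $V \subset \mathbb{C}^n$ is cut out by the single traceless condition $\sum x_i = 0$.

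It therefore suffices to exhibit strictly more than $12$ copies of $V$ in the finite-dimensional graded $S_n$-algebra $R/(P_2, P_3)R$, since this forces $\dim_{\mathbb{C}} M_V/(P_2, P_3)M_V > 12$, contradicting the $B$-freeness of $M_V$. Natural $V$-covariants in $R$ come from the traceless power sums $F^{(k)} = (x_i^k - \tfrac1n P_k)_{i=1}^n$ for $k \ge 1$, and from $S_n$-symmetrized products of such with higher invariants; each surviving covariant contributes one copy of $V$ in the quotient.

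The main obstacle is carrying out this count rigorously and uniformly in $(b, c)$. A plausible two-step plan is: first, verify the inequality $\dim_{\mathbb{C}} M_V/(P_2, P_3)M_V > 12$ in Macaulay2 for several base cases (e.g.\ $(b, c) = (4, 1)$, where $R^{S_n}$ is already known to be CM), by computing the $V$-isotypic part of $R/(P_2, P_3)R$ directly from the subspace-arrangement defining equations of $X_\lambda^0$; second, exhibit one additional $V$-covariant arising structurally from the equality $a = b + c$, which allows the block of size $b+c$ in $E_\lambda$ to ``re-split'' as a $b$-block plus a $c$-block inside other $S_n$-translates, producing extra incidences of components of $X_\lambda$ along the loci where two of $y_1, y_2, y_3$ coincide. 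These extra incidences are invisible to $R^{S_n}$ (which may still be CM) but should surface as a persistent additional $V$-copy modulo $(P_2, P_3)R$, yielding the required strict inequality for all $b > c \ge 1$.
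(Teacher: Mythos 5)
Your overall strategy is the same as the paper's: pass to the multiplicity module $M=\Hom_{S_n}(\h,\mathcal{O}(X_\lambda^0))$ of the reflection representation, compute that its generic rank over $\CC[P_2,P_3]$ is $6\cdot\dim\h^H=12$, and then show that $M$ cannot be a free $\CC[P_2,P_3]$-module. Up to that point your reductions are sound (including the remark that if $\mathcal{O}(X_\lambda^0)$ were CM, hence graded free over $\CC[P_2,P_3]$, every isotypic component would be free).

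The genuine gap is in your final step, which is the whole content of the proposition: you propose to check $\dim_{\CC} M/(P_2,P_3)M>12$ for a few base cases and then assert that an ``additional $V$-covariant arising structurally from $a=b+c$'' should persist for all $b>c\ge 1$. No such covariant is exhibited, and no mechanism is given for why a finite list of checks would control the infinite family; as stated, the argument does not close. The paper's resolution of exactly this difficulty is to observe that, after restricting to $E_\lambda$ and writing $T_i=(x^i-z^i)u+(y^i-z^i)v$ with $z=-(\beta+1)x-\beta y$, the module $M$ sits inside $R_{\beta+1,\beta}u\oplus R_{\beta+1,\beta}v$ and depends only on the single complex parameter $\beta=b/c$. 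This turns the infinite family into one parametric computation: the low-degree Hilbert function of $M$ is the rank of an explicit matrix over $\QQ[\beta]$, and the locus where it drops is bounded by ideals of minors, yielding the exceptional set $\beta\in\{0,\pm 1,\pm 2,\pm\tfrac12\}$ (plus a value ruled out by a single check). The only value relevant to integers $b>c$ is $\beta=2$, i.e. $b=2c$, which is handled separately because $R_{3,2}=A_{(3,2,1)}$ is already not CM. Two further points you should incorporate: (i) the inequality $Q(1)>12$ only applies when $(P_2,P_3)$ is a regular sequence on $M$; if it is not, $M$ fails to be CM for that reason instead, and both branches must be stated; (ii) the numerical target is reached by computing the numerator $q(t)=t+t^2+t^3+t^4+2t^5+3t^6+3t^7+t^8+\cdots$ of $h_M(t)$ through degree $8$, whose coefficients already sum to $13>12$ --- you need some such explicit certificate rather than an appeal to a hoped-for extra covariant.
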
 

\begin{proof} Since, as we saw above, $A_{(3,2,1)}$ is not CM, 
we may assume that $b\ne 2c$. Let $\beta=b/c$. 

In this case, consider $M:=\Hom_{S_n}(\h,{\mathcal O}(X_\lambda^0))$, where 
$n=2(b+c)$ and $\h$ is the reflection representation of $S_n$. It is easy to see from standard invariant theory for $S_n$ 
that $M$ is generated over $R_{\beta+1,\beta}$ by the polynomials 
$$
T_i:=\sum_{j=1}^n x_j^iu_j, 
$$
where $\sum_{j=1}^n u_j=0$. 
(For example, this follows from the fact that the invariants of $S_n$ 
in $\Bbb C[x_1, \dots,x_n, u_1, \dots ,u_n]$ are generated by the polynomials $\sum x_j^iy_j^s$, 
a special case of the Weyl's Fundamental Theorem of Invariant Theory).  

Let $\bold x=(x_1,\dots,x_n)\in E_\lambda$. 
Let $x$ be the common value
of the first $b+c$ coordinates $x_j$, $y$ of the next $b$ coordinates, and $z$ for the last $c$ coordinates.
Let 
\[
\sum_{j=1}^{b+c} u_j=u, \quad \sum_{j=b+c+1}^{2b+c} u_j=v, \quad \sum_{j=2b+c+1}^{2(b+c)} u_j=w.
\] 
Then $u+v+w=0$, so $w=-u-v$, and
$$
T_i=x^iu+y^iv+z^iw=(x^i-z^i)u+(y^i-z^i)v,
$$
where $z=-(\beta+1)x-\beta y$.  
So we get
$$
T_i=(x^i-(-(\beta +1)x-\beta y)^i)u+(y^i-(-(\beta +1)x-\beta y)^i)v.
$$
Thus, $M$ is the module over $R_{\beta+1,\beta}$ generated by $T_i$ inside 
$R_{\beta+1,\beta}u\oplus R_{\beta+1,\beta}v$. In particular, $M$ is finitely generated. 

The rank of $M$ over $R_{\beta+1,\beta}$ is clearly $2$ (since we have two vectors $u,v$), so
rank of $M$ over $\Bbb C[P_2,P_3]$ should be $6\cdot 2=12$. 

It suffices for us to show that $M$ is not a CM module over the CM algebra $R_{\beta+1,\beta}$, i.e., that it is not free as a module over $\Bbb C[P_2,P_3]$. This will follow from the following lemma:

\begin{lemma} Set $\deg(u)=\deg(v)=0$. Writing the Hilbert series of $M$ as $h_M(t)=\frac{q(t)}{(1-t^2)(1-t^3)}$, for $a\neq 0, \pm 1, \pm 2, \pm\frac 12$, we have
$$
q(t)=t+t^2+t^3+t^4+2t^5+3t^6+3t^7+t^8+\cdots .
$$
\end{lemma} 

\begin{proof}
The argument is similar to that of Proposition~\ref{degrees}. In this case, \verb+computations4.m2+ computes $\dim M_n$, the dimension of the degree $n$ part of $M$, for $1\le n\le 8$, using the spanning set 
\[
\{P_{i_1}P_{i_2} \cdots P_{i_k}T_{i_{k+1}} \mid i_1+\cdots+i_{k+1}=n\}.
\]
We get the list $\{1, 1, 2, 3, 5, 7, 10, 11\}$ for the ranks in degrees $n=1,\dots,8$, which tells us that 
\[
q(t)=t+t^2+t^3+t^4+2t^5+3t^6+3t^7+t^8+ \cdots
\]
for generic $\beta$.

We then determine for which $\beta$ these dimensions drop, i.e., the ranks of the corresponding matrices drop. In degree up to $5$ we get that the decomposition of the ideal of minors only degenerates for $\beta=0, \pm 1$. Taking a specific minor from the matrices of degrees $6,7,8$ which splits, and using its roots as an upper bound, we see that the roots of these polynomials are contained in $\{0, \pm 1, \pm 2, \pm \frac{1}{2}, 4\}$, and a single check rules out $\beta=4$ as a degenerate value. Hence outside of these values $q(t)$ is generic.
\end{proof}

If $\{P_2,P_3\}$ is not a regular sequence, then $M$ is not CM. If it is a regular sequence, then the Hilbert polynomial $Q(t)$ of $M$ equals $q(t)$. In this case, we have just shown that $Q(1)\ge 13$, while the rank of $M$ is $12$, so $M$ cannot be CM. This proves the proposition.
\end{proof} 

\section{The non-CMness theorems} \label{sec:nonCM}

\begin{theorem}\label{noCMinv} 
Let $\lambda=(\lambda_1,\dots,\lambda_r)$ be a partition of $n$.
If $\lambda$ has at least four distinct parts, or has three distinct parts 
$a>b>c$ with $a\ne b+c$, then $X_\lambda/S_n$ and hence $X_\lambda$ is not CM.  
\end{theorem}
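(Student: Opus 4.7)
The plan is to reduce to the base case $\lambda=(a,b,c)$ with $a>b>c$ distinct positive integers and $a\ne b+c$, for which $Y_\lambda=X_\lambda/S_n$ is not CM by Corollary~\ref{noncm}; non-CMness of $X_\lambda$ then follows from Hochster--Roberts (since CM descends under finite-group quotients in characteristic zero). The main tools are Propositions~\ref{formneigh} and~\ref{formneighy}, which allow non-CMness of $X_\lambda/S_n$ (equivalently $Y_\lambda$) to propagate between a partition and one obtained from it by adding or removing a single part, subject to a condition on subset sums.

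I will first treat the three-distinct-parts case $\lambda=(a^{r_1},b^{r_2},c^{r_3})$ with $a\ne b+c$ by building up from $(a,b,c)$ via iterated application of Proposition~\ref{formneigh}, adding copies in the order all $a$'s, then all $b$'s, then all $c$'s. Adding a copy of $a$ to the intermediate partition $(a^k,b,c)$ is safe because the only non-trivial sum of current parts that could equal $a$ is $b+c$, excluded by hypothesis. Adding $b$ to $(a^{r_1},b^k,c)$ is safe because any non-trivial sum either involves an $a$ (hence exceeds $b$), or has the form $\{b,?\}$ (also exceeding $b$), or would be of the form $jc$ with $j\ge 2$ (requiring multiple copies of $c$, not yet present at this stage). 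Adding $c$ is always safe since any non-trivial sum is at least $2c$.

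For four or more distinct parts, I first claim one can choose three of $\lambda$'s distinct values $a>b>c$ with $a\ne b+c$. Among any four distinct positive integers $a_1>a_2>a_3>a_4$, the four candidate sum relations $a_1=a_2+a_3$, $a_1=a_2+a_4$, $a_1=a_3+a_4$, $a_2=a_3+a_4$ cannot simultaneously hold in any three at a time, since any three of them force two of the $a_i$'s to coincide (e.g., $a_1=a_2+a_3=a_2+a_4$ forces $a_3=a_4$). Hence at least two of the four size-three subsets satisfy the condition. Having fixed a good triple $(a,b,c)$ among $\lambda$'s distinct values, I reduce $Y_\lambda$ to $Y_{(a,b,c)}$ by iterated application of Proposition~\ref{formneighy}: at each step I pick a value $a_m\notin\{a,b,c\}$ and remove one of its copies (eventually reducing its multiplicity to zero), then finally reduce the multiplicities of $a,b,c$ themselves to one each.

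The main obstacle is arranging the reduction order in the four-or-more-parts case so that the subset-sum condition of Proposition~\ref{formneighy} holds at every step. Removing a copy of value $a_m$ is obstructed precisely when $a_m$ can be written as a sum of at least two parts of the current partition with strictly smaller values. Such obstructions are broken by first shrinking the multiplicities of the summand values: reducing the multiplicity of the smallest current value is always unobstructed (since non-trivial sums exceed it), and iterating this procedure from smaller to larger values eventually unblocks all the reductions we need. An induction on the total multiplicity ensures that this process terminates at $(a,b,c)$, whence the theorem follows from the three-distinct base case together with Corollary~\ref{noncm}.
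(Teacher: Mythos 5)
Your base case and your treatment of the three-distinct-parts case are fine: building up from $(a,b,c)$ by adjoining all the $a$'s, then the $b$'s, then the $c$'s keeps the subset-sum hypothesis of Proposition~\ref{formneigh} satisfied at every step, and this is the same mechanism the paper uses (one-part modifications shuttling non-CMness to and from the base case supplied by Corollary~\ref{noncm}).

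The gap is in the case of four or more distinct parts. You fix a good triple $(a,b,c)$ in advance and then try to strip $\lambda$ down to exactly $(a,b,c)$, breaking obstructions by ``shrinking the multiplicities of the summand values.'' That device fails precisely when the summands witnessing the obstruction are themselves target values of multiplicity one, which you are not allowed to remove. Concretely, take $\lambda=(5,3,2,1)$: the good triples are $\{5,3,1\}$ and $\{5,2,1\}$, and if you fix $(a,b,c)=(5,2,1)$ the only remaining move is to delete the part $3$, which is blocked because $3=2+1$ is a nontrivial sum of parts of $(5,2,1)$; neither the $2$ nor the $1$ can be removed first, since both are needed in the target. So your procedure succeeds only for certain choices of good triple, and you neither specify how to choose one nor prove that a workable choice always exists --- the combinatorial fact that at least two of the four triples are good does not by itself do this. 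The paper avoids the issue by never committing to a target: it inducts on the number of parts, removing a single part at each step so as to preserve the \emph{hypothesis of the theorem} (at least four distinct parts, or three distinct parts $a>b>c$ with $a\ne b+c$); for instance, among the four smallest distinct parts $a>b>c>d$ at least one of $a\ne b+c$, $a\ne b+d$ holds, and that dictates whether to delete a copy of $d$ or of $c$. To repair your argument you would need either to adopt such a local invariant, or to prove a lemma guaranteeing that some good triple admits an unobstructed removal order.
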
 

\begin{proof}
The proof is by induction on $r$. For $r=3$ (base of induction), 
the statement follows from Corollary \ref{noncm}. Assume that $r\ge 4$.
If $\lambda$ has four smallest distinct parts $a>b>c>d$, 
then either $a\ne b+c$ or $a\ne b+d$, so we can remove 
either $d$ or $c$ using Proposition \ref{formneigh}, 
descending from $r$ to $r-1$ and preserving the hypothesis of the theorem. 
So it remains to consider the case when there are only three distinct parts 
$a>b>c$ and $a\ne b+c$. In this case, since $r\ge 4$, one of these parts should occur more than once, and we can 
remove it using Proposition \ref{formneigh}, descending from $r$ to $r-1$ and 
preserving the hypothesis. The theorem is proved. 
\end{proof} 

\begin{theorem}\label{noCM}
If $\lambda=(b+c,b^r,c^s)$, $b>c$, then $X_\lambda$ is not CM. 
\end{theorem}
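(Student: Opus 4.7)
The plan is to induct on $r+s$ using the formal neighborhoods method of Proposition~\ref{formneigh}, with the base case $r=s=1$ handled by Proposition~\ref{qplusrqr}. Thus the non-CM property of $X_{b+c,b,c}$ already established in the previous section will be transported to the full family $\lambda=(b+c,b^r,c^s)$ by attaching one extra part at a time, and no further analysis of the coordinate ring or its reflection-isotypic component will be needed.

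For the inductive step, I would assume the result for every partition of the shape $(b+c,b^{r'},c^{s'})$ with $r',s'\ge 1$ and $r'+s'<r+s$, and then split into two cases depending on the multiplicity of $c$. If $s\ge 2$, I would take $\lambda'=(b+c,b^r,c^{s-1})$ and $\ell=c$: any nontrivial sum of parts of $\lambda'$ consists of at least two summands, each of size $\ge c$, and is therefore strictly greater than $c$, so the hypothesis of Proposition~\ref{formneigh} is met. If instead $s=1$ and $r\ge 2$, I would take $\lambda'=(b+c,b^{r-1},c)$ and $\ell=b$: a nontrivial representation of $b$ as a sum of parts of $\lambda'$ cannot involve the part $b+c$ (which is already too large), and cannot involve a copy of $b$ (the remaining positive summands would then have to add to $0$), so it would have to be a sum of at least two copies of $c$; but $\lambda'$ contains only a single $c$, so no such representation exists. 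In either case Proposition~\ref{formneigh} yields that $X_\lambda=X_{\lambda'\cup\ell}$ is not CM, closing the induction.

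The main conceptual work has already been done in Proposition~\ref{qplusrqr}, where the non-CM property of the three-part case is extracted from a Hilbert-series count on the reflection-isotypic component of the coordinate ring. By comparison, what remains here is purely a combinatorial verification that neither $b$ nor $c$ can be nontrivially represented as a sum of the parts available at each stage of the induction. A potential worry is the case $c\mid b$, since then $b=kc$ could in principle be produced from many copies of $c$; however, by arranging the induction so that copies of $b$ are only ever adjoined to partitions still containing a single $c$, this obstruction is sidestepped automatically, and no divisibility hypothesis on $b/c$ enters the argument.
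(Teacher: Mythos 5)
Your proof is correct and follows the same route as the paper: reduce to $\lambda=(b+c,b,c)$ by repeated application of Proposition~\ref{formneigh} and then invoke Proposition~\ref{qplusrqr}. The paper states this reduction in a single line, whereas you carefully order the removals (strip copies of $c$ down to one first, and only then strip copies of $b$) so that the hypothesis of Proposition~\ref{formneigh} is never violated even when $c$ divides $b$ --- a detail the paper leaves implicit.
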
 

\begin{proof}
Using Proposition \ref{formneigh}, we can reduce to the situation when $\lambda=(b+c,b,c)$, 
in which $X_\lambda$ is not CM by Proposition \ref{qplusrqr}. 
\end{proof} 

\begin{conjecture}\label{qplusrqr1} If $\lambda=((b+c)^q,b^r,c^s)$, $b>c$, then $X_\lambda$ is not CM for $q>1$. 
Thus, $X_\lambda$ is not CM for any $\lambda$ that has at least three distinct parts.
\end{conjecture}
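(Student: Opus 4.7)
The plan has two stages. In Stage 1, I apply Proposition \ref{formneigh} iteratively to reduce the conjecture to showing that $X_{((b+c)^q, b, c)}$ is not CM for all $q \ge 2$: starting from this partition, I may add copies of $b$ (legal because with only a single copy of $c$ present, $b$ cannot be written nontrivially as $c + \cdots + c$) and then add copies of $c$ (legal because $c$ is the smallest part), building up to $((b+c)^q, b^r, c^s)$ while preserving non-CM-ness at every step. The case $q = 1$ is Theorem \ref{noCM}, so I would prove the remaining cases by induction on $q$.

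For the inductive step I consider the point $x \in X_{((b+c)^q, b, c)}$ whose first $b+c$ coordinates equal $1$ and whose remaining $q(b+c)$ coordinates equal $0$. A case analysis of the partitions of cycle type $\lambda$ that refine the two level sets of $x$ shows that the formal neighborhood of $x$ in $X_\lambda$ is the union $A \cup B$, where
\[
A = (\text{formal disk}) \times (\text{formal completion of } X_{((b+c)^{q-1}, b, c)} \text{ at } 0),
\]
\[
B = (\text{formal completion of } X_{(b,c)} \text{ at } 0) \times (\text{formal completion of } X_{((b+c)^q)} \text{ at } 0),
\]
each of dimension $q + 2$. By the inductive hypothesis---which is Proposition \ref{qplusrqr} in the initial step $q = 2$---the variety $X_{((b+c)^{q-1}, b, c)}$ is not CM at $0$, so $A$ is not CM, whereas $B$ is a product of CM arrangements and is therefore CM.

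The hard step will be transferring the non-CM-ness of $A$ to the whole local ring $R = \mathcal{O}_{X_\lambda, x}$, since $B$ is a separate component of the same dimension that could in principle absorb the cohomological defect. The natural tool is the Mayer--Vietoris short exact sequence $0 \to R \to R_A \oplus R_B \to R_{A \cap B} \to 0$ and its induced long exact sequence in local cohomology: the non-CM-ness of $A$ guarantees a nonzero class in $H^i_{\mathfrak m}(R_A)$ for some $i < q+2$, and this should force a nonzero class in $H^{\bullet}_{\mathfrak m}(R)$ below the top degree, provided the connecting maps into $H^\bullet_{\mathfrak m}(R_{A \cap B})$ do not cancel it. The intersection $A \cap B$ cuts out the locus where the first $b+c$ coordinates are all equal and the remaining coordinates lie in $X_{((b+c)^{q-1}, b, c)} \cap X_{((b+c)^q)}$, a (non-$S_n$-invariant) subspace arrangement whose depth must be controlled by a separate argument---this is the principal technical obstacle. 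As a parallel strategy, probably essential for the base case $q = 2$, I would mimic the module-theoretic method of Proposition \ref{qplusrqr}: analyze $M = \Hom_{S_n}(\mathfrak h, \mathcal{O}(X_\lambda^0))$ over the invariant ring and show via a Hilbert-series computation that its rank strictly exceeds the value forced by CM-ness. For small $q, b, c$ this is tractable by computer algebra, while a uniform proof for all $q, b, c$ likely requires the complex-rank interpolation of rational Cherednik algebras indicated in the closing remark of Section 4.
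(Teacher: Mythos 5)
The statement you are proving is left as an open conjecture in the paper: the authors explicitly note that one can reduce to $r=s=1$ via Proposition \ref{formneigh}, that one \emph{cannot} then reduce $q$ by removing a copy of $b+c$ (since $b+c$ is the sum of the two other parts), and that the only evidence beyond $q=1$ is a Macaulay2 computation for $\lambda=(b+1,b+1,b,1)$ at random $b$. Your Stage 1 is correct and coincides with the paper's own reduction, and your local analysis at the point $(1^{b+c},0^{q(b+c)})$ is accurate: the formal neighborhood really is the union $A\cup B$ you describe (the size-$(b+c)$ level set is covered either by a single $(b+c)$-block or by the $b$-block together with the $c$-block, and these are the only two cases), which is precisely why Proposition \ref{formneigh} fails to apply and why the induction on $q$ does not close.

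The genuine gap is that you never establish the one thing that is actually at issue: that the union of a non-CM piece $A$ with an equidimensional CM piece $B$ is not CM. Your Mayer--Vietoris strategy reduces this to showing that $\depth$ of $A\cap B$ exceeds $\depth(R_A)$ (so that the map $H^i_{\mathfrak m}(R_A)\oplus H^i_{\mathfrak m}(R_B)\to H^i_{\mathfrak m}(R_{A\cap B})$ cannot be injective in the offending degree $i$), but $A\cap B$ involves the scheme-theoretic intersection $R/(I_A+I_B)$ --- which need not be reduced --- and the arrangement $X_{((b+c)^{q-1},b,c)}\cap X_{((b+c)^q)}$, whose depth you do not control; you correctly flag this as ``the principal technical obstacle'' but offer no way past it. Moreover, even your base case $q=2$ is deferred either to computer algebra or to ``mimicking'' Proposition \ref{qplusrqr}, neither of which is carried out, and a machine check for specific $(b,c)$ would not yield the statement for all $b>c$. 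So what you have is a correct framing of why the problem is hard, plus an honest list of what would need to be proved, rather than a proof; the conjecture remains open after your argument exactly where it was open before it.
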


It suffices to prove Conjecture \ref{qplusrqr1} for the case $r=s=1$, as one can reduce to this case using Proposition \ref{formneigh}. 
However, we cannot reduce $q$ by removing a copy of $b+c$, since it equals the sum of two other parts, $b$ and $c$.  

Conjecture \ref{qplusrqr1} is supported by computational evidence for $q=2$, $r=s=1$, i.e.,  $\lambda=(b+c,b+c,b,c)$, obtained using Macaulay2
(namely, we computed that $Y_\lambda$ is not CM for $\lambda=(b+1,b+1,b,1)$ 
for random $b$). 

Let us now discuss the remaining case of two distinct parts, i.e., $\lambda=(b^r,c^s)$, $b>c$. 
Let $b/c=b'/c'$ and $\gcd(b',c')=1$. If $c'\le r$, $3\le b'\le s$, then we know from Proposition \ref{adapt3.11} 
that $X_\lambda/S_n$ and hence $X_\lambda$ is not CM. Otherwise, if $c=1$ (and $b>s$ or $b=2$), 
we know from Theorem \ref{cher} that $X_\lambda$ is CM. Finally, 
by Proposition \ref{422}, $X_{(4,2,2)}$ is CM, while $X_{(3^r,2^s)}$ is not CM 
for $r\ge 1$, $s\ge 1$, $r+s\ge 3$ and $X_{(5^r,2^s)}$ is not CM 
for $r\ge 1$, $s\ge 2$. Apart from these cases, we do not know the answer. 
In particular, the answer is not known for $\lambda=(4,2^s)$. 

The situation with $Y_\lambda$ for $\lambda=((b+1)^q,b^r,1^s)$ seems to be even more complicated. 
Computational evidence (see above) suggests a conjecture that $Y_\lambda$ is not CM if $q\ge 2$. 
However, we have the following conjecture about $q=1$: 

\begin{conjecture}\label{aplus1}
If $\lambda=(b+1,b,1^s)$ then $Y_\lambda$ is CM for generic complex $b$.
Moreover, the exceptional values of $b$ for such $\lambda$ are $b=0$ and $b=\pm p/q$, where $1\le p\le s+1$, $1\le q\le 2$ (i.e., the same as those for 
$(b^2,1^{s+1})$, see Remark \ref{exce}). 
\end{conjecture}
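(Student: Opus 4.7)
The plan is to establish the CM property via an interpolation argument analogous to Theorem \ref{SVconj}, combined with a refined formal-neighborhood analysis to pin down the exceptional values. Consider the two-parameter family of algebras $\Lambda_{\alpha,\beta,s}$ generated over $\mathbb{C}[\alpha^{\pm 1},\beta^{\pm 1}]$ by
\[
Q_{\alpha,\beta,s,i}:=\alpha y_1^i+\beta y_2^i+z_1^i+\cdots+z_s^i,\qquad i\ge 1.
\]
Applying Proposition \ref{fingen} with $\lambda=(\alpha,\beta,1^s)$, away from the vanishing locus of a finite product of linear forms in $\alpha,\beta$ (the partial-sum conditions $\lambda_S\ne 0$), $\Lambda_{\alpha,\beta,s}$ is finitely generated over $\mathbb{C}[Q_{\alpha,\beta,s,i},\,i=1,\dots,s+2]$ with constant Hilbert series on that complement. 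Since the CM property is open in flat families, proving the generic statement along the line $\alpha=\beta+1$ reduces to exhibiting a single base value $b_0$ for which $Y_{(b_0+1,b_0,1^s)}$ is CM.

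To identify the candidate exceptional values, I would invoke the formal neighborhoods method of Proposition \ref{formneighy}, mimicking the argument in Proposition \ref{adapt3.11}. For each $b=\pm p/q$ with $1\le p\le s+1$ and $q\in\{1,2\}$, one locates a point $\pi(y_1,y_2,z_1,\ldots,z_s)\in Y_\lambda$ at which several coordinates coincide in such a way that a nontrivial partial sum $\lambda_S$ of the extended partition vanishes. A Vandermonde/rigidity computation should then split the formal neighborhood as a product of a formal disk with $Y_{\lambda'}$ for a smaller partition $\lambda'$ already known to be non-CM, for instance via Proposition \ref{adapt3.11} applied to a partition of the form $\lambda'=(b^2,1^{s+1-k})$. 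This would recover the predicted exceptional set, at least as an upper bound; the sign $\pm$ appears because both signs make some partial sum of the extended weighted partition vanish, triggering either non-finite-generation (Remark \ref{exce}) or the splitting just described.

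The main obstacle is producing a verifiable CM base point on the line $\alpha=\beta+1$. Unlike the one-parameter setting of Theorem \ref{SVconj}, this line does not pass through any partition of the form $(m^r,1^s)$ covered by Theorem \ref{cher}; indeed Theorem \ref{noCM} shows that $X_{(b+1,b,1^s)}$ itself is never CM, so one must work intrinsically with the $S_n$-quotient. I see two viable routes. The first is a direct Macaulay2 verification at a single large integer $b_0$ away from the predicted exceptional set, along the lines of Propositions \ref{degrees} and \ref{bplus1b}, which would furnish the base case by a finite computation. The second, more conceptual route is the Cherednik-algebra interpolation discussed in the final remark of Section 4: one expects $\Lambda_{b+1,b,s}$ to carry the faithful action of an appropriate two-parameter interpolation of the spherical rational Cherednik algebra, with CM-ness at generic $b$ following from simplicity of that algebra. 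Matching the reducibility loci of this interpolated algebra with the formal-neighborhood predictions would then show that the exceptional set is exactly $\{0\}\cup\{\pm p/q:1\le p\le s+1,\,q\in\{1,2\}\}$, rather than merely contained in it.
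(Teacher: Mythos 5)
Note first that the statement you are addressing is stated in the paper as a conjecture: the paper itself proves it only for $s=1$ (Proposition \ref{bplus1b}) and $s=2$ (Proposition \ref{aplus1s2}), in both cases by exhibiting an explicit finite list of module generators of $\CC[Y_\lambda]$ over $\CC[P_2,\dots,P_{s+2}]$ and verifying freeness by a computer algebra computation. Your outline correctly reproduces the architecture of Theorem \ref{SVconj} (flatness away from the partial-sum locus, openness of the CM property, formal neighborhoods to locate bad values), and you correctly identify the fatal obstacle: the line $\alpha=\beta+1$ contains no point at which CM-ness is supplied by Theorem \ref{cher} or any other result in the paper. Neither of your proposed workarounds closes this gap. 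A Macaulay2/MAGMA verification at a single $b_0$ is a computation that must be redone for each $s$ (the variety and its ambient dimension grow with $s$), so at best it yields the conjecture one value of $s$ at a time --- which is exactly the current status of the paper --- and the Cherednik-algebra interpolation is flagged in the paper's own remark at the end of Section 4 as a program, not a proof. So the first sentence of the conjecture is not established by your argument for any $s\ge 3$, let alone uniformly in $s$.

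The treatment of the exceptional set has two further problems. First, your claim that for $b=+p/q$ ``some partial sum of the extended weighted partition vanishes'' is false: when $b>0$ all parts of $(b+1,b,1^s)$ are positive, so every nonempty partial sum is nonzero. Only the negative values $b=-p/q$ (and $b=0$) kill a partial sum and trigger non-finite-generation; the positive exceptional values must arise from a genuinely different mechanism (the analogue of Proposition \ref{adapt3.11} and of the Hilbert-series drop in Remark \ref{exce}), and each would require its own non-CM witness. Second, even granting all of that, the formal-neighborhood method only shows that the listed values \emph{are} exceptional, i.e., it proves one inclusion. The conjecture asserts equality, which requires proving CM-ness at \emph{every} $b$ outside the predicted finite set; openness in flat families cannot deliver this (it only gives ``all but finitely many,'' with no control over which), and determining the exact exceptional set is precisely the hard open problem here --- compare Conjecture \ref{badset}, which remains open in general even in the one-parameter case. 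In short, your proposal is a reasonable research plan consistent with the paper's own expectations, but it is not a proof, and the sign analysis of the exceptional values is incorrect as written.
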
 

Note that Conjecture \ref{aplus1} holds for $s=1$ by Proposition \ref{bplus1b}. 
It was also proved by Eric Rains for $s=2$ using MAGMA: 

\begin{proposition}\label{aplus1s2} 
Conjecture \ref{aplus1} holds for $s=2$. More specifically, 
if $b\ne 0,\pm \frac{1}{2},\pm 1,\pm 2,\pm \frac{3}{2},\pm 3$, then 
the algebra $\Bbb C[Y_{b+1,b,1,1}]$ is $CM$ with Hilbert series 
$$
\frac{1+t^5+t^6+t^7+t^8+t^9+2t^{10}+2t^{11}+t^{12}+t^{13}}{(1-t^2)(1-t^3)(1-t^4)}. 
$$
\end{proposition}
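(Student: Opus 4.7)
The plan is to mirror the MAGMA-based strategy of Proposition~\ref{bplus1b} in the four-variable setting. Set $\lambda = (b+1, b, 1, 1)$, so that $R_b := \Bbb C[Y_\lambda]$ is generated over $\Bbb C$ by the weighted Newton sums $P_{i,\lambda} = (b+1)y_1^i + by_2^i + y_3^i + y_4^i$. The claimed Hilbert series has denominator $(1-t^2)(1-t^3)(1-t^4)$ and its numerator evaluated at $t=1$ equals $12$; the goal is to prove that for generic $b$ the quotient $R_b/(P_{1,\lambda})$ is a free module of rank $12$ over $\Bbb C[P_{2,\lambda}, P_{3,\lambda}, P_{4,\lambda}]$.

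First, verify that the generic rank really is $12$. By Bezout, the system $P_{i,\lambda} = c_i$ ($i=1,\dots,4$) has $4! = 24$ solutions in $\Bbb C^4$ for generic $c_i$; the stabilizer $S_\lambda = S_2$ (swapping $y_3, y_4$) acts freely on a generic orbit, and $R_b \subset \Bbb C[y_1,\dots,y_4]^{S_\lambda}$ is birational (by the footnote to Proposition~\ref{fingen}), so both sides have rank $12$ over $\Bbb C[P_{1,\lambda},\dots,P_{4,\lambda}]$. Next, using MAGMA produce monic annihilating polynomials in $\Bbb C[P_{1,\lambda},\dots,P_{4,\lambda}][u]$ for the $S_\lambda$-invariant quantities $y_1$, $y_2$, $y_3+y_4$, $y_3 y_4$ (each of degree at most $12$), and take their product (after first converting the last two into a degree-two resolvent in $u$ whose roots are $y_3$ and $y_4$). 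The result is a monic polynomial $Q(u)$ having each $y_j$ as a root, which yields a linear recursion
\[
\sum_{i=0}^{N} a_i P_{n+i,\lambda} = 0, \qquad n \ge 0,
\]
with $a_N = 1$ and $a_i \in \Bbb C[P_{1,\lambda},\dots,P_{4,\lambda}]$, reducing every $P_{n,\lambda}$ with $n \ge N$ to a combination of the $P_{i,\lambda}$ with $i < N$.

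The core verification is then finite. Propose a candidate spanning set $T_1,\dots,T_{12}$ of monomials in the $P_{i,\lambda}$ whose degrees match the numerator exponents $(0, 5, 6, 7, 8, 9, 10, 10, 11, 11, 12, 13)$, and check in MAGMA, modulo $P_{1,\lambda}$, that (i) every $P_{i,\lambda}$ with $i < N$ lies in the $\Bbb C[P_{2,\lambda}, P_{3,\lambda}, P_{4,\lambda}]$-module $M$ spanned by $T_1,\dots,T_{12}$, and (ii) every product $T_i T_j$ lies in $M$. By the recursion, (i) shows $M$ contains every generator of $R_b/(P_{1,\lambda})$ and (ii) shows $M$ is an algebra, so $M = R_b/(P_{1,\lambda})$. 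Since $M$ is generated by $12$ elements and has generic rank $12$, it is a free module, giving the CM property and the stated Hilbert series.

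The exceptional values of $b$ are precisely those where one of the rank checks in (i) or (ii) drops; these are zeros of determinants in $\Bbb Q[b]$ that, after factoring (possibly via a single carefully chosen minor that completely splits, as in the proof of Proposition~\ref{degrees}, to first bound the bad locus), are claimed to lie in $\{0, \pm 1, \pm 2, \pm 3, \pm 1/2, \pm 3/2\}$, matching the prediction of Conjecture~\ref{aplus1} via Remark~\ref{exce}. The principal obstacle is the sheer size of the symbolic computation: the degree $N$ of the recursion is on the order of several dozen, and linear algebra over $\Bbb Q[b]$ through the top degree $13$ of the numerator is expensive, so exploiting the $S_2$ symmetry between $y_3$ and $y_4$ and first performing rank tests over a large prime field before lifting back to $\Bbb Q[b]$ to pin down the exceptional locus will likely be essential.
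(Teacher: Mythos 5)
Your proposal matches the paper's proof in all essentials: the same candidate generating set of $12$ monomials in the $P_i$ with degrees matching the numerator, a monic annihilating polynomial for each coordinate over $\Bbb C[P_2,P_3,P_4]$ whose product yields a linear recursion (the paper takes four degree-$12$ polynomials, giving $N=48$), a finite MAGMA check that the module contains all $P_i$ with $i<N$ and is closed under products $T_iT_j$, and the conclusion that a $12$-generated module of generic rank $12$ is free. Your Bezout/$S_2$-quotient justification of the rank $12$ is a welcome detail the paper leaves implicit, and your variant of building the recursion from annihilators of $y_1,y_2,y_3+y_4,y_3y_4$ is an inessential variation.
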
 

\begin{proof} The proof is similar to the proof of Proposition \ref{bplus1b}. Let us show that $\Bbb C[Y_{b+1,b,1,1}]$ is generated as a module over $\Bbb C[P_2,P_3,P_4]$ by 
\begin{align*}
T_1=1,\quad T_2=P_5,\quad T_3=P_6,\quad T_4=P_7,\quad T_5=P_8,\quad T_6=P_9,\\
T_7=P_{10},\quad T_8=P_5^2,\quad T_9=P_{11},\quad T_{10}=P_5P_6,\quad 
T_{11}=P_5P_7,\quad T_{12}=P_5P_8.
\end{align*}
Since this module is of generic rank $12$, this means that it is a free module, 
which implies the theorem. 

It is easy to compute (e.g., using MAGMA) that each of the elements $x,y,z,w$ is annihilated by a degree $12$ 
monic polynomial over $\Bbb C[P_2,P_3,P_4]$; 
let us denote these polynomials by $Q_x$, $Q_y$, $Q_z$, $Q_w$. 
Let 
$$
Q(u):=Q_x(u)Q_y(u)Q_z(u)Q_w(u)=\sum_{j=0}^{48} a_ju^j, 
$$
 a monic polynomial of degree $48$ (i.e., $a_{48}=1$). 
It is clear that the $P_i$ satisfy the linear recursion 
$$
\sum_{i=0}^{48}a_iP_{n+i}=0,\quad n\ge 0.
$$
So it suffices to check that $P_{12},\dots,P_{47}$  belong to the $\Bbb C[P_2,P_3,P_4]$-module $M$ generated by $T_1,\dots,T_{12}$, 
and that this module is in fact an algebra (i.e., $T_iT_j\in M$). 
This is done using MAGMA (the computation takes under an hour).  
\end{proof} 

A similar conjecture applies, of course, to the case $\lambda=(b+1,b^s,1)$, as it is related to the previous case by the transformation $b\mapsto 1/b$ and rescaling. 
However, we do not know what happens for $\lambda=(b+1,b^r,1^s)$.

\end{document}